\newcommand{\Z}{\mathbb{Z}}
\newcommand{\K}{\mathbb{K}}
\newcommand{\sK}{\mathscr{K}}
\newcommand{\cC}{\mathcal{C}}
\newcommand{\cD}{\mathcal{D}}
\newcommand{\cM}{\mathcal{M}}
\newcommand{\id}{\mathrm{id}}
\newcommand{\tr}{\mathrm{tr}}
\newcommand{\ot}{\otimes}
\newcommand{\eps}{\varepsilon}
\newcommand{\To}{\Rightarrow}
\newcommand{\fd}{\mathrm{fd}}
\newcommand{\fg}{\mathrm{fg}} 
\newcommand{\B}[1]{B #1}
\newcommand{\core}[1]{\sK ({#1})}
\newcommand{\Mod}[2]  
{
  \ifthenelse{\equal{#1}{}}{  			% If A is empty ...
		\ifthenelse{\equal{#2}{}}		% And B is empty ...
			{\mathrm{Mod}}{ 			% just print "Mod"
				{\mathrm{Mod} \textrm{-}#2}		% Else, print "Mod-B"
			}
	}{									% but if A is not empty,
		\ifthenelse{\equal{#2}{}}		% and B is empty,
			{{#1\textrm{-}\mathrm{Mod}}}{		% print "A-mod",
				{{#1\textrm{-}\mathrm{Mod}\textrm{-}#2}}	% otherwise print "A-Mod-B".
			}
	}
}
\DeclareMathOperator{\Hom}{Hom}
\DeclareMathOperator{\Ob}{Ob}
\DeclareMathOperator{\Aut}{Aut}
\DeclareMathOperator{\Rep}{Rep}
\DeclareMathOperator{\Frob}{Frob}
\DeclareMathOperator{\CY}{CY}
\DeclareMathOperator{\Vect}{Vect}
\DeclareMathOperator{\ev}{ev}
\DeclareMathOperator{\End}{End}
\DeclareMathOperator{\Alg}{Alg}
\DeclareMathOperator{\Nat}{Nat}
\DeclareMathOperator{\Bicat}{Bicat}
\theoremstyle{definition}
\newtheorem{newdef}{Definition}[section]
\newtheorem{ex}[newdef]{Example}
\newtheorem{remark}[newdef]{Remark}
\theoremstyle{plain} %default
\newtheorem{theorem}[newdef]{Theorem}
\newtheorem{lemma}[newdef]{Lemma}
\newtheorem{prop}[newdef]{Proposition}
\newtheorem{cor}[newdef]{Corollary}
\numberwithin{equation}{section}
\title{An equivalence between semisimple symmetric Frobenius algebras and Calabi-Yau categories} %An equivalence between Frobenius algebras and
  \author{Jan Hesse}
    \address{Fachbereich Mathematik, Universit\"at Hamburg, Bereich Algebra und Zahlentheorie, Bundesstraße 55, D – 20 146 Hamburg}
\begin{document}
\begin{abstract}
  We show that the bigroupoid of semisimple symmetric Frobenius
  algebras over an algebraically closed field and the bigroupoid of
  Calabi-Yau categories are equivalent. To this end, we construct a
  trace on the category of finitely-generated representations of a
  symmetric, semisimple Frobenius algebra, given by the composite of
  the Frobenius form with the Hattori-Stallings trace.
  % In order to put this result into perspective, we introduce the
  % concept of the \enquote{equivariantization} of a 2-functor between
  % bicategories endowed with a $G$-action.  We prove that the
  % equivalence between semisimple symmetric Frobenius algebras and Calabi-Yau categories is
  % actually the \enquote{equivariantization} of the 2-functor sending a
  % semisimple algebra to its category of finitely-generated modules,
  % where we endow both the bicategory of algebras, bimodules and
  % intertwiners and the bicategory of linear categories with the
  % trivial $SO(2)$-action and take homotopy fixed points as considered
  % in \cite{hsv16}.
  \end{abstract}
  \begin{flushright}
    ZMP-HH/16-21\\
    Hamburger Beitr\"age zur Mathematik Nr. 618 \\[1cm]
  \end{flushright}
  \maketitle
\section{Introduction}
The starting point for this paper is the weak 2-functor sending an
algebra to its category of representations.  If $\Alg_2$ denotes the
Morita bicategory and $\Vect_2$ is the bicategory of linear
categories, the weak 2-functor $\Rep: \Alg_2^\fd \to \Vect_2^\fd$
sending a semisimple algebra to its category of finitely-generated
modules is an equivalence between the fully-dualizable objects of the
bicategories $\Alg_2$ and $\Vect_2$, cf. \cite[Appendix A]{bdsv15-2}.

In this paper, we endow the objects of $\Alg_2^\fd$ with the
additional structure of a symmetric Frobenius algebra. We show that
the category of finitely-generated representations of a semisimple
symmetric Frobenius algebra carries a canonical structure of a
Calabi-Yau category as considered in \cite{morre-segal}. The
Calabi-Yau structure on the representation category is given by the
composite of the Hattori-Stallings trace with the Frobenius form. This
allows us to construct a 2-functor
\begin{equation}
   \Rep^{\fg} : \Frob \to \CY
\end{equation}
between the bigroupoid of semisimple symmetric Frobenius algebras $\Frob$, and
the bigroupoid of Calabi-Yau categories $\CY$. Our main result in
theorem \ref{thm:cy-frob-equivalence} shows that this 2-functor is an
equivalence of bigroupoids.

This result is related to topological quantum field theories as
follows: the Baez-Dolan cobordism hypothesis, proved by Lurie
\cite{Lurie09} in an $(\infty,n)$-categorical setting, asserts that a
framed, fully-extended, topological quantum field theory
is classified by its value on the positively framed point. However,
one needs more data to classify \emph{oriented} theories, which is
given by the datum of an homotopy fixed point of a certain
$SO(n)$-action on the target category.

In 2-dimensions, it is claimed in \cite{fhlt} that the structure of an
$SO(2)$ fixed point on the fully-dualizable objects of $\Alg_2$ is
given by a semisimple symmetric Frobenius algebra. Furthermore,
Schommer-Pries showed in \cite{schommerpries-classification} that the
bigroupoid $\Frob$ classifies fully extended oriented field theories
with target $\Alg_2$. Thus, semisimple symmetric Frobenius algebras should
correspond to homotopy fixed points of an $SO(2)$-action on
fully-dualizable objects of $\Alg_2$.

In her thesis, Davidovich \cite{davi11} observed that the
$SO(2)$-action on $\Alg_2^\fd$ is trivializable. Thus, it suffices to
consider the trivial $SO(2)$-action. This approach is taken in
\cite{hsv16}, where the bigroupoids of homotopy fixed points of the trivial
$SO(2)$-action on $\Alg_2^\fd$ and $\Vect_2^\fd$ are computed in a
purely bicategorical setting, and respectively found to be equivalent
to $\Frob$ and $\CY$. Thus, combining the results above, one
should expect that the two bigroupoids $\Frob$ and $\CY$ are
equivalent. In this paper, we establish the equivalence directly,
without referring to the cobordism hypothesis and homotopy fixed
points.

The paper is organized as follows: in section
\ref{sec:frob-alg-morita-context}, we recall the definition of the
bicategory $\Frob$ of semisimple, symmetric Frobenius algebras,
compatible Morita contexts and intertwiners, and of the bicategory
$\CY$ of Calabi-Yau categories. 

In section \ref{sec:rep-construction}, we construct a weak 2-functor
$\Rep^{\fg}$ which sends a semisimple symmetric Frobenius algebra to
its category of finitely-generated modules. We endow this category of
representations with the Calabi-Yau structure given by the composite
of the Frobenius form with the Hattori-Stallings trace in definition
\ref{cy-structure-rep}, and show in theorem
\ref{thm:cy-frob-equivalence} that this functor is an equivalence of
bicategories. Section \ref{sec:proving-equivalence} is devoted to the
proof of theorem \ref{thm:cy-frob-equivalence}.

% The second part of the paper deals with relating the equivalence of
% bicategories $\Frob \cong \CY$ of theorem
% \ref{thm:cy-frob-equivalence} with homotopy fixed points. In section
% \ref{sec:induced-functors}, we define the concept of
% \enquote{equivariantization} of a 2-functor between bicategories
% endowed with a $G$-action, and give an explicit description of the
% induced 2-functor on homotopy fixed points. As an example, we endow
% both $\Alg_2$ and $\Vect_2$ with the trivial $SO(2)$-action and compute
% the induced functor $\Rep^{SO(2)}$ on homotopy fixed points. Theorem
% \ref{thm:cy-frob-induced} then shows that this induced 2-functor is
% naturally isomorphic to the 2-functor $\Rep^{\fg}$ constructed in
% section \ref{sec:rep-construction}, and thus diagram \eqref{eq:motivation}
% commutes up to a pseudo-natural isomorphism.

Throughout the paper we use the following conventions: we will work
over an algebraically closed field $\K$. All Frobenius algebras
appearing will be symmetric.
\section*{Acknowledgments}
I would like to thank Christoph Schweigert and Alessandro Valentino
for inspiring discussions, and Ingo Runkel for pointing out lemma
\ref{lem:cy-additive}. I would also like to thank the referee for
her/his very thorough report, which helped to sharpen the focus of
this paper. The author is supported by the RTG 1670
\enquote{Mathematics inspired by String Theory and Quantum Field
  Theory}.
\section{Frobenius algebras and Calabi-Yau categories}
\label{sec:frob-alg-morita-context}
For the convenience of the reader, we recall the definitions of
compatible Morita contexts between symmetric Frobenius algebras. This
material has already appeared in \cite{schommerpries-classification}
and \cite{hsv16}.
\subsection{The bicategory of symmetric Frobenius algebras}
\begin{newdef}
  A \emph{Frobenius algebra} $(A,\lambda)$ over a field $\K$ consists of
  an associative, unital $\K$-algebra $A$, together with a linear map
  $\lambda: A \to \K$, so that the pairing
  \begin{align}
    \begin{aligned}
      A \ot_\K A &\to \K \\
      a \ot b & \mapsto \lambda(ab)
    \end{aligned}
  \end{align}
  is non-degenerate. A Frobenius algebra is called symmetric if
  $\lambda(ab)=\lambda(ba)$ for all $a$ and $b$ in $A$.
\end{newdef}
\begin{newdef}
  \label{def:morita-context}
  Let $A$ and $B$ be two algebras. A \emph{Morita context} $\cM$ consists of
  a quadruple $\cM:=( {_{B}M_{A}}, {_{A}N_{B}},\eps,\eta)$, where
  $_BM_A$ is a $(B,A)$-bimodule, $_AN_B$ is an $(A,B)$-bimodule, and
  \begin{equation}
    \begin{aligned}
      \eps&: { _{A}N \otimes_{B}{ M_{A}}} \to {_{A}A_{A}} \\
      \eta&: {_BB_B} \to{ _{B}M \otimes_{A} {N_B}}
    \end{aligned}
  \end{equation}
  are isomorphisms of bimodules, so that the two diagrams
  \begin{equation}
    \label{eq:def-morita-context-1}
    \begin{tikzcd}[column sep=large]
      {_BM} \ot_A {N_B} \ot_B {M_A} \rar{\id_M \ot \eps}  & {_BM} \otimes_A {A_A} \dar \\
      {_BB} \ot_B {M_A} \uar{\eta \otimes \id_M} \rar & {_BM_A}
    \end{tikzcd}
  \end{equation}
  \begin{equation}
    \label{eq:def-morita-context-2}
    \begin{tikzcd}[column sep=large]
      {_AN} \ot_B {M} \ot_A {N_B} \dar{\eps \otimes \id_N} & {_AN} \otimes_B {B_B} \dar \lar{\id_N \ot \eta} \\
      {_AA} \ot_A {N_B} \rar & {_AN_B}
    \end{tikzcd}
  \end{equation}
  commute.
\end{newdef}
These two conditions are not independent from each other, as the next
lemma proves.
\begin{lemma}[{\cite[Lemma 3.3]{bass}}]
\label{lem:morita-context-only-1-diagram}
  In the situation of definition \ref{def:morita-context}, diagram
  \eqref{eq:def-morita-context-1} commutes if and only if diagram
  \eqref{eq:def-morita-context-2} commutes.
\end{lemma}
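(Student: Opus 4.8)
The plan is to reduce both commutativity conditions to elementwise ``triangle identities'', to exhibit an adjointness relation between the two associated endomorphisms so that each diagram amounts to one endomorphism being the identity, and then to transfer one identity to the other via a non-degeneracy argument that is the only place where the \emph{invertibility} of $\eps$ and $\eta$ (rather than the mere bimodule-map property) enters.

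First I would unwind the diagrams. Writing $\eta(1_B)=\sum_i m_i\ot n_i\in {_BM}\ot_A{N_B}$ and $\eps(n\ot m)=\langle n,m\rangle\in A$, one traces the upper-then-right path of diagram \eqref{eq:def-morita-context-1} to see that it sends $m\in M$ to $\psi(m):=\sum_i m_i\langle n_i,m\rangle$, via $m\mapsto 1_B\ot m\mapsto \eta(1_B)\ot m\mapsto\sum_i m_i\ot\eps(n_i\ot m)\mapsto\sum_i m_i\langle n_i,m\rangle$; hence \eqref{eq:def-morita-context-1} commutes if and only if $\psi=\id_M$. Dually, tracing diagram \eqref{eq:def-morita-context-2} produces $\phi(n):=\sum_i\langle n,m_i\rangle n_i$, and \eqref{eq:def-morita-context-2} commutes if and only if $\phi=\id_N$.

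The key step is the adjointness identity $\langle n,\psi(m)\rangle=\langle\phi(n),m\rangle$ for all $m\in M$ and $n\in N$. Both sides equal $\sum_i\langle n,m_i\rangle\langle n_i,m\rangle$: the left-hand side by the right $A$-linearity of $\eps$ (pulling $\langle n_i,m\rangle\in A$ out of the second slot), the right-hand side by its left $A$-linearity (pulling $\langle n,m_i\rangle\in A$ out of the first slot). Thus $\psi$ and $\phi$ are mutually adjoint with respect to the pairing $\eps$, using nothing beyond $\eps$ being a bimodule map.

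Finally I would establish the non-degeneracy that makes the adjointness bite: if $\langle n',m\rangle=0$ for all $m\in M$, then $n'=0$, and symmetrically in the $M$-variable. Since $\eps$ is injective, the hypothesis forces $n'\ot m=0$ in ${_AN}\ot_B{M}$ for every $m$; feeding this into the map $N\cong N\ot_B B\xrightarrow{\id_N\ot\eta}N\ot_B M\ot_A N$, $n'\mapsto\sum_i n'\ot m_i\ot n_i$, which is injective because $\eta$ is an isomorphism, yields $n'=0$. Granting this, if \eqref{eq:def-morita-context-1} commutes then $\psi=\id_M$, so the adjointness identity gives $\langle\phi(n),m\rangle=\langle n,m\rangle$ for all $m$, whence $\phi=\id_N$ and \eqref{eq:def-morita-context-2} commutes; the converse is the mirror argument, using $\psi=\id_M\iff\phi=\id_N$ together with non-degeneracy in the $M$-variable (which embeds $M\cong B\ot_B M\xrightarrow{\eta\ot\id_M}M\ot_A N\ot_B M$). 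The main obstacle is exactly this non-degeneracy step: one must be careful that the vanishing of $n'\ot m$ for each individual $m$ genuinely upgrades to the vanishing of $n'$, which holds only because $\eta$ supplies a finite copairing $\sum_i m_i\ot n_i$ through which $N$ embeds into the triple tensor product. I would expect the rest of the argument to be routine bookkeeping of bimodule linearity.
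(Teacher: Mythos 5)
Your proof is correct. The paper itself gives no argument for this lemma --- it simply cites \cite[Lemma 3.3]{bass} --- so your write-up is a self-contained substitute rather than a variant of the paper's proof. Your reduction of the two diagrams to $\psi=\id_M$ and $\phi=\id_N$, the adjointness identity $\langle n,\psi(m)\rangle=\langle\phi(n),m\rangle=\sum_i\langle n,m_i\rangle\langle n_i,m\rangle$ (which uses only the $(A,A)$-bimodule property of $\eps$), and the transfer via non-degeneracy are all sound, and this is essentially the classical argument. You also correctly isolate the one genuinely delicate step: upgrading ``$n'\ot m=0$ in $N\ot_B M$ for every $m$'' to ``$n'=0$''. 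For the record, that step closes cleanly because the assignment $(m,n)\mapsto n'\ot m\ot n$ is an $A$-balanced map $M\times N\to N\ot_B M\ot_A N$ which vanishes identically (each value is the image of $(n'\ot m)\ot n=0$ under the associativity isomorphism), so the induced map on $M\ot_A N$ kills $\eta(1_B)$, and injectivity of $\id_N\ot\eta$ then forces $n'=0$; your appeal to the finite copairing $\sum_i m_i\ot n_i$ is exactly this. Note that injectivity of $\eps$ and $\eta$ is all you use, so your argument in fact proves slightly more than the stated lemma requires.
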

% \begin{proof}
%   This follows from the proof of \cite[Lemma 3.3]{bass}
% %   First suppose that diagram \eqref{eq:morita-def-1}
% %   commutes. Then,
% %   \begin{equation}
% %     \begin{aligned}
% %       g( (g(q \ot p).q' \ot p') &= g(q \ot p). g(q' \ot p') &&
% %       \text{($g$ is left $B$-linear)} \\
% %       &= g(q \ot p.g(q' \ot p')) \qquad && \text{($g$ is right
% %       $B$-linear)} \\
% %       &=g(q \ot f(p \ot q').p') && \text{(since
% %       \eqref{eq:morita-def-1} commutes)} \\
% %       &=g(q.f(p \ot q') \ot p') .
% %     \end{aligned}
% %   \end{equation}
% %   Since $g$ is an isomorphism, applying $g^{-1}$ to both sides of
% %   the above equations shows that diagram \eqref{eq:morita-def-2}
% %   commutes.

% %   Now suppose that diagram \eqref{eq:morita-def-2} commutes.
% %   Then,
% %   \begin{equation}
% %     \begin{aligned}
% %       f(p \ot q.f(p' \ot q')) &= f(p \ot q).f(p' \ot q') \qquad &&
% %       \text{($f$ is right $A$-linear)} \\
% %       &=f(f(p \ot q).p' \ot q') && \text{($f$ is left $A$-linear)}
% %       \\
% %       &=f(p.g(q \ot p') \ot q') && \text{(since
% %       \eqref{eq:morita-def-2} commutes)} \\
% %       &=f(p \ot g(q \ot p').q')
% %     \end{aligned}
% %   \end{equation}
% %   Applying $f^{-1}$ to both sides of this equations shows that
% %   diagram \eqref{eq:morita-def-1} commutes.
% \end{proof}
Note that Morita contexts are the adjoint 1-equivalences in the
bicategory $\Alg_2$ of algebras, bimodules and intertwiners. These
form a category, where the morphisms are given by the following:
\begin{newdef}
  \label{def:morphism-of-morita-context}
  Let $\cM:=( {_{B}M_{A}}, {_{A}N_{B}},\eps,\eta)$ and $\cM':=(
  {_{B}{M'}_{A}}, {_{A}{N'}_{B}},\eps',\eta')$ be two Morita contexts
  between two algebras $A$ and $B$. A \emph{morphism of Morita contexts}
  consists of a morphism of $(B,A)$-bimodules $f:M \to M'$ and a
  morphism of $(A,B)$-bimodules $g:N \to N'$, so that the two diagrams
  \begin{equation}
    \begin{tikzcd}[row sep=large, column sep=large]
      _BM \ot_A N_B \rar{f \ot g} & {_BM' \ot_A N'_B} \\
      B \uar{\eta} \urar[swap]{\eta'}& {}
    \end{tikzcd}
    \qquad
    \begin{tikzcd}[row sep=large, column sep=large]
      _AN \ot_B M_A \rar{g \ot f} \dar[swap]{\eps} & _AN' \ot_B M'_A \dlar{\eps'} \\
      A & {}
    \end{tikzcd}
  \end{equation}
  commute.
\end{newdef}
If the algebras in question have the additional structure of a
symmetric Frobenius form $\lambda: A \to \K$, we would like to
formulate a compatibility condition between the Morita context and the
Frobenius forms. The next lemma helps us to do that:

\begin{lemma}[{\cite[Lemma 3.71]{schommerpries-classification} or \cite[Lemma 2.3]{hsv16}}]
  \label{lem:A/[A,A]-B[B,B]-iso}
  Let $A$ and $B$ be two algebras, and let $( {_{B}M_{A}},
  {_{A}N_{B}},\eps,\eta)$ be a Morita context between $A$ and $B$.
  Then, there is a canonical isomorphism of vector spaces
  \begin{align}
    \begin{aligned}
      f: A / [A,A] & \to B /[B,B] \\
      [a] &\mapsto \sum_{i,j} \left[\eta^{-1}(m_j.a \ot n_i) \right]
    \end{aligned}
  \end{align}
  where $n_i$ and $m_j$ are defined by
  \begin{equation}
    \eps^{-1}(1_A)= \sum_{i,j} n_i \ot m_j \in N \ot_B M .
  \end{equation}
\end{lemma}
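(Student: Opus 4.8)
The plan is to recognize $A/[A,A]$ as the zeroth Hochschild homology $HH_0(A)$, so that the lemma becomes the Morita invariance of $HH_0$; rather than invoking this abstractly, I would prove it by hand by writing down an explicit inverse and checking that the two composites are the identity. Throughout, the only structure I expect to use is that $\eps$ and $\eta$ are \emph{bimodule isomorphisms}; the compatibility conditions \eqref{eq:def-morita-context-1} and \eqref{eq:def-morita-context-2} turn out not to be needed. Write $e := \eps^{-1}(1_A) = \sum_{i,j} n_i \ot m_j \in N \ot_B M$. The first observation is that, since $\eps^{-1}$ is a map of $(A,A)$-bimodules, $e$ is \emph{central} in the sense that $a.e = e.a$ for every $a \in A$; moreover the displayed formula for $f$ depends only on the element $e$ and not on the chosen expansion, which already shows that $f$ is canonical once it is well-defined.

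For well-definedness I would introduce the auxiliary pairing
\begin{equation*}
  \phi : N \ot_B M \to B/[B,B], \qquad \phi(n \ot m) := [\eta^{-1}(m \ot n)].
\end{equation*}
This is well-defined over $\ot_B$ because $\eta^{-1}$ is a $(B,B)$-bimodule map and classes in $B/[B,B]$ are invariant under cyclic permutation, and by construction $f([a]) = \phi(e.a)$. The key point is that $\phi$ is a \emph{trace}: the identity $\phi(a.\xi) = \phi(\xi.a)$ for $a \in A$ and $\xi \in N \ot_B M$ holds because $\eta^{-1}$ is defined on the $A$-balanced product $M \ot_A N$, so that $\eta^{-1}((m.a) \ot n) = \eta^{-1}(m \ot (a.n))$. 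Combining the trace property with the centrality of $e$ gives
\begin{equation*}
  \phi\bigl(e.(aa')\bigr) = \phi\bigl((e.a).a'\bigr) = \phi\bigl(a'.(e.a)\bigr) = \phi\bigl((e.a').a\bigr) = \phi\bigl(e.(a'a)\bigr),
\end{equation*}
so $f$ kills commutators and descends to a map $A/[A,A] \to B/[B,B]$.

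For the inverse I would write $e' := \eta(1_B) = \sum_l p_l \ot q_l \in M \ot_A N$ and, symmetrically, set $g([b]) := \sum_l [\eps(q_l.b \ot p_l)]$; the same argument with $\eps$ and $\eta$ interchanged shows that $g$ is a well-defined map $B/[B,B] \to A/[A,A]$. To see $g \circ f = \id$, I would evaluate on a representative, rewrite each summand through the analogous pairing $\psi(m \ot n) := [\eps(n \ot m)]$, and collapse the inner sum using $\eta \circ \eta^{-1} = \id$ in the form $\sum_l p_l \ot (q_l . b) = \eta(b)$; the remaining sum then telescopes via $\eps \circ \eps^{-1} = \id$ together with the right $A$-linearity of $\eps^{-1}$, leaving exactly $[a]$. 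The computation of $f \circ g = \id$ is word-for-word symmetric. I expect the only genuine obstacle to be the bookkeeping in the well-definedness step: one must keep straight which tensor products are balanced over $A$ and which over $B$, and track precisely where the cyclicity of $B/[B,B]$, as opposed to the balancing, is being invoked. Once the trace property of $\phi$ is isolated, everything else is formal.
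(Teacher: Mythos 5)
Your argument is correct and complete: the key points --- that the swap map $\phi(n\ot m):=[\eta^{-1}(m\ot n)]$ descends to $N\ot_B M$ precisely because of cyclicity in $B/[B,B]$, that the $A$-balancing of $M\ot_A N$ gives the trace identity $\phi(a.\xi)=\phi(\xi.a)$, that centrality of $e=\eps^{-1}(1_A)$ then kills commutators, and that $g\circ f=\id$ collapses via $\eta\circ\eta^{-1}=\id$ and $\eps(e.a)=a$ --- all check out, and you are right that only the bimodule-map property of $\eps$ and $\eta$ (not the triangle identities) is used. The paper itself offers no proof, deferring to \cite[Lemma 3.71]{schommerpries-classification} and \cite[Lemma 2.3]{hsv16}, where the argument is essentially the same explicit construction of mutually inverse trace maps, so your write-up is a faithful (and slightly more structured) version of the standard proof.
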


The isomorphism $f$ described in Lemma \ref{lem:A/[A,A]-B[B,B]-iso}
allows to introduce the following relevant definition.
\begin{newdef}
  \label{def:morita-compatible}
  Let $(A,\lambda^A)$ and $(B,\lambda^B)$ be two symmetric Frobenius
  algebras, and let $( {_{B}M_{A}}, {_{A}N_{B}},\eps,\eta)$ be a
  Morita context between $A$ and $B$. Since the Frobenius algebras are
  symmetric, the Frobenius forms necessarily factor through $A/[A,A]$
  and $B/[B,B]$.  We call the Morita context \emph{compatible} with
  the Frobenius forms, if the diagram
  \begin{equation}
    \label{eq:morita-compatible}
    \begin{tikzcd}
      A/[A,A] \ar{dr}[swap]{\lambda^A} \ar{rr}{f} && B/[B,B] \ar{dl}{\lambda^B} \\
      & \K &
    \end{tikzcd}
  \end{equation}
  commutes.  Using the notation from lemma
  \ref{lem:A/[A,A]-B[B,B]-iso}, this means that
  \begin{equation}
    \label{eq:morita-compatible-form}
    \lambda^A([a])=\sum_{i}\lambda^B \left( \left[\eta^{-1}(m_i.a \ot n_i) \right] \right)
  \end{equation}
  for all $a \in A$.
\end{newdef}

\begin{newdef}
  \label{def:frob-bicategory}
  Let $\Frob$ be the bicategory where
  \begin{itemize}
  \item objects are given by semisimple, symmetric Frobenius algebras,
  \item 1-morphisms are given by compatible Morita contexts, as in
    definition \ref{def:morita-compatible},
  \item 2-morphisms are given by isomorphisms of Morita contexts.
  \end{itemize}
  Note that $\Frob$ has got the structure of a symmetric monoidal
  bigroupoid, where the monoidal product is given by the tensor
  product over the ground field, which is the monoidal unit.
\end{newdef}

\subsection{The bicategory of Calabi-Yau categories }
Another main player of this paper are Calabi-Yau categories, which we
define next.

Let $\K$ be a field, and let $\Vect$ be the category of $\K$-vector
spaces. Recall the following terminology: a linear category is an
abelian category with a compatible enrichment over $\Vect$. A linear
functor is a right-exact additive functor which is also a functor of
$\Vect$-enriched categories.
\begin{newdef}
  \label{def:finite-cat}
  Following \cite[Appendix A]{bdsv15-2}, we call a linear category
  $\cC$ \emph{finite}, if
  \begin{enumerate}
  \item there are only finitely many isomorphism classes of simple
    objects of $\cC$,
  \item the category $\cC$ has enough projectives, 
  \item every object of $\cC$ has finite length, and
  \item the $\Hom$-spaces of $\cC$ are finite-dimensional.
  \end{enumerate}
\end{newdef}

\begin{newdef}
  \label{def:calabi-yau-cat}
  A \emph{Calabi-Yau category} $(\cC, \tr^\cC)$ is a linear, finite,
  semisimple category $\cC$, together with a family of linear
  maps
  \begin{equation}
    \tr_c^\cC: \End_\cC(c) \to  \K
  \end{equation}
  for each object $c$ of $\cC$, so that:
  \begin{enumerate}
  \item for each $f \in \Hom_\cC(c,d)$ and for each $g \in
    \Hom_\cC(d,c)$, we have that
    \begin{equation}
      \tr^\cC_c(g \circ f) = \tr^\cC_d (f \circ g), 
    \end{equation}
    \item for all objects $c$ and $d$ of $\cC$, the induced pairing
    \begin{align}
      \begin{aligned}
        \langle - \, , - \rangle_\cC: \Hom_\cC(c,d) \ot_\K \Hom_\cC(d,c) &\to \K      \\
        f \ot g& \mapsto \tr_c^\cC(g \circ f)
      \end{aligned}
    \end{align}
    is a non-degenerate pairing of $\K$-vector spaces.
  \end{enumerate}
  We will call the collection of morphisms $\tr_c^\cC$ a trace on
  $\cC$. Note that an equivalent way of defining a Calabi-Yau
  structure on a linear category $\cC$ is by specifying a natural
  isomorphism
  \begin{equation}
    \Hom_\cC(c,d) \to \Hom_\cC(d,c)^*,
  \end{equation}
  cf. \cite[Proposition 4.1]{schau12}.
\end{newdef}
\begin{remark}
  A Calabi Yau category with one object is a semisimple symmetric Frobenius
  algebra. More generally, the space of all endomorphisms of an object
  of a Calabi-Yau category has got the structure of a semisimple symmetric
  Frobenius algebra.
\end{remark}
We begin with two preparatory lemmas:
\begin{lemma}
\label{lem:cy-additive}
  Let $(\cC, \tr^\cC)$ be a Calabi-Yau category. Then, the trace is
  automatically additive: for each $f \in \End_\cC(x)$ and each $g \in \End_\cC(y)$, we
    have that
    \begin{equation}
      \tr_{x \oplus y }^\cC (f \oplus g) = \tr_x^\cC(f) + \tr_y^\cC(g).
    \end{equation}
\end{lemma}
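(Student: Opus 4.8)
The plan is to reduce additivity to the cyclicity axiom (condition~(1) in Definition~\ref{def:calabi-yau-cat}) via the biproduct structure on $x \oplus y$; the non-degeneracy of the pairing will not be needed, and the only other ingredient is that each $\tr^\cC_c$ is by definition a \emph{linear} map. First I would fix the canonical inclusion and projection morphisms of the direct sum, $\iota_x\colon x \to x\oplus y$, $\iota_y\colon y \to x\oplus y$, $\pi_x\colon x\oplus y \to x$ and $\pi_y\colon x\oplus y \to y$, which exist because $\cC$ is additive. They satisfy the biproduct relations $\pi_x \iota_x = \id_x$, $\pi_y \iota_y = \id_y$, $\pi_x\iota_y = 0$, $\pi_y \iota_x = 0$ and $\iota_x\pi_x + \iota_y\pi_y = \id_{x\oplus y}$. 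In terms of these, the endomorphism $f \oplus g$ of $x\oplus y$ decomposes as
\begin{equation}
  f \oplus g = \iota_x \circ f \circ \pi_x + \iota_y \circ g \circ \pi_y,
\end{equation}
as one checks by pre- and post-composing with the $\iota$'s and $\pi$'s.

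Next I would apply linearity of $\tr^\cC_{x\oplus y}$ to split the trace along this decomposition, obtaining $\tr^\cC_{x\oplus y}(f\oplus g) = \tr^\cC_{x\oplus y}(\iota_x f \pi_x) + \tr^\cC_{x\oplus y}(\iota_y g \pi_y)$. Each summand is then handled by cyclicity: viewing $\iota_x f \pi_x$ as the composite of $f\pi_x \colon x\oplus y \to x$ followed by $\iota_x\colon x \to x\oplus y$, condition~(1) gives
\begin{equation}
  \tr^\cC_{x\oplus y}\big(\iota_x \circ (f\pi_x)\big) = \tr^\cC_{x}\big((f\pi_x)\circ \iota_x\big) = \tr^\cC_x\big(f \circ (\pi_x \iota_x)\big) = \tr^\cC_x(f),
\end{equation}
and symmetrically $\tr^\cC_{x\oplus y}(\iota_y g \pi_y) = \tr^\cC_y(g)$. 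Adding the two identities yields the claim.

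I do not anticipate a genuine obstacle here: once the biproduct relations are in place the computation is forced, and the only subtlety worth flagging is conceptual rather than technical. The statement mixes traces living on three different objects ($x$, $y$ and $x\oplus y$), so linearity of a single $\tr^\cC_c$ does not by itself relate them; it is precisely the cyclicity axiom that transports the contributions of $f$ and $g$ back from $\End_\cC(x\oplus y)$ to $\End_\cC(x)$ and $\End_\cC(y)$, which is why additivity holds \emph{automatically} rather than having to be imposed.
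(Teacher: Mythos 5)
Your proof is correct and follows essentially the same route as the paper's: both decompose $f\oplus g$ via the biproduct relations $\id_{x\oplus y}=\iota_x p_x+\iota_y p_y$, split the trace by linearity, and use cyclicity to move each summand down to $\End_\cC(x)$ and $\End_\cC(y)$. Your remark that non-degeneracy is not needed and that cyclicity is the essential ingredient is accurate.
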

\begin{proof}
  Denote by $p_x:x \oplus y \leftrightarrow x : \iota_x$ and by $p_y:x \oplus y
  \leftrightarrow y : \iota_y$ the canonical projections and
  inclusions. Then,
  \begin{equation}
    \id_{x \oplus y} = \iota_x \circ p_x + \iota_y \circ p_y. 
  \end{equation}
Then, by the linearity and cyclicity of the trace,
\begin{equation}
  \begin{aligned}
    \tr^\cC_{x \oplus y}(f+g) &= \tr^\cC_{x \oplus y} ((f+g) \circ
    (\iota_x \circ p_x + \iota_y \circ p_y)) \\
    &= \tr^\cC_{x \oplus y} ((f+g) \circ \iota_x \circ p_x )
    +\tr^\cC_{x \oplus y} ((f+g) \circ \iota_y \circ p_y) \\
&=\tr^\cC_x(p_x \circ (f+g) \circ \iota_x) + \tr^\cC_y(p_y \circ (f+g)
\circ \iota_y) \\
&=\tr^\cC_x(f) + \tr_y^\cC(g).
  \end{aligned}
\end{equation}
\end{proof}
Adapting the proof of \cite[Section 1]{Stallings65} to the setting of
linear categories establishes the following lemma:
\begin{lemma}
   \label{lem:trace-matrix-form}
  Let $(\cC, \tr^\cC)$ be a Calabi-Yau category, and let $x_1, \ldots,
  x_n$ be objects of $\cC$. Let $x:= \oplus_{i=1}^n x_i$, and let $f
  \in \End_\cC(x)$. Since $\cC$ is an additive category, we may write
  the morphism $f$ in matrix form as
  \begin{equation}
    f=
    \begin{pmatrix}
      f_{11} & \ldots & f_{1n} \\
      \vdots & & \vdots \\
      f_{n1} & \ldots & f_{nn}
    \end{pmatrix}
  \end{equation}
  where the entries $f_{ij}$ are morphisms $f_{ij} \in \Hom_\cC(x_j,
  x_i)$.  Then,
  \begin{equation}
    \tr_x^\cC(f) = \sum_{i=1}^n \tr_{x_i}^\cC(f_{ii}).
  \end{equation}
\end{lemma}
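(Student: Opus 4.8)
The plan is to mimic the proof of Lemma \ref{lem:cy-additive}, generalizing from a two-fold diagonal sum to an arbitrary $n$-fold sum that is allowed to carry off-diagonal entries. For each $i$ denote by $p_i : x \to x_i$ and $\iota_i : x_i \to x$ the canonical projection and inclusion associated with the biproduct decomposition $x = \oplus_{i=1}^n x_i$. These satisfy $p_i \circ \iota_j = \delta_{ij}\, \id_{x_i}$ together with the completeness relation $\id_x = \sum_{i=1}^n \iota_i \circ p_i$. Under this decomposition the matrix entries of $f$ are recovered as $f_{ij} = p_i \circ f \circ \iota_j$; in particular $f_{ii} = p_i \circ f \circ \iota_i \in \End_\cC(x_i)$.

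First I would insert the completeness relation and use linearity of the trace to split the sum:
\[
  \tr_x^\cC(f) = \tr_x^\cC\left( f \circ \sum_{i=1}^n \iota_i \circ p_i \right) = \sum_{i=1}^n \tr_x^\cC(f \circ \iota_i \circ p_i).
\]
Next I would apply the cyclicity axiom (condition (1) of Definition \ref{def:calabi-yau-cat}) to each summand. Viewing $p_i \in \Hom_\cC(x, x_i)$ and $f \circ \iota_i \in \Hom_\cC(x_i, x)$ as a composable pair, cyclicity gives
\[
  \tr_x^\cC\bigl( (f \circ \iota_i) \circ p_i \bigr) = \tr_{x_i}^\cC\bigl( p_i \circ (f \circ \iota_i) \bigr) = \tr_{x_i}^\cC(f_{ii}).
\]
Summing over $i$ then yields the claimed identity $\tr_x^\cC(f) = \sum_{i=1}^n \tr_{x_i}^\cC(f_{ii})$.

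There is no serious obstacle here: the argument is a direct transcription of the proof of Lemma \ref{lem:cy-additive}. The off-diagonal entries $f_{ij}$ with $i \neq j$ never enter, because cyclicity converts each term $f \circ \iota_i \circ p_i$ into the endomorphism $p_i \circ f \circ \iota_i = f_{ii}$ of $x_i$ before any off-diagonal contribution can arise. The only point requiring mild care is the bookkeeping of sources and targets, so that cyclicity is applied to a genuinely composable pair and so that the trace on the right-hand side is correctly evaluated at the object $x_i$ rather than at $x$.
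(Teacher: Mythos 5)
Your proof is correct and is precisely the argument the paper intends: the paper gives no explicit proof, remarking only that one adapts \cite[Section 1]{Stallings65} to linear categories, and your computation is the evident $n$-fold generalization of the paper's own proof of Lemma \ref{lem:cy-additive}, using the biproduct relations $\id_x = \sum_i \iota_i \circ p_i$ and $f_{ii} = p_i \circ f \circ \iota_i$ together with linearity and cyclicity of the trace. Nothing is missing.
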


\begin{newdef}
  \label{def:calabi-yau-functor}
  Let $(\cC, \tr^\cC)$ and $(\cD, \tr^\cD)$ be two Calabi-Yau
  categories. A linear functor $F: \cC \to \cD$ is called a \emph{Calabi-Yau
  functor}, if
  \begin{equation}
    \tr_c^\cC(f)=\tr_{F(c)}^\cD(F(f))
  \end{equation}
  for each $c \in \Ob(\cC)$ and each $f\in \End_\cC(c)$.
  Equivalently, one may require that
  \begin{equation}
    \langle F f, Fg \rangle_\cD = \langle f, g \rangle_\cC 
  \end{equation}
  for every pair of morphisms $f:c \to c'$ and $g: c' \to c$ in $\cC$.

  If $F$, $G: \cC \to \cD$ are two Calabi-Yau functors between
  Calabi-Yau categories, a natural transformation of Calabi-Yau
  functors is just an ordinary natural transformation.
\end{newdef}

\begin{newdef}
  Let $\CY$ be the symmetric monoidal bigroupoid of Calabi-Yau
  categories, consisting of
  \begin{description}
  \item[objects] Calabi-Yau categories, 
  \item[1-morphisms] Equivalences of Calabi-Yau categories as in
    definition \ref{def:calabi-yau-functor},
  \item[2-morphisms] Natural isomorphisms.
  \end{description}
The monoidal structure of the bigroupoid $\CY$ is given by the Deligne tensor product of
finite, abelian categories.
\end{newdef}

\begin{lemma}
  \label{lem:cy-classification}
  Let $\cC$ be a finite semisimple linear category with $n$ simple
  objects over a field $\K$. Then, $\cC$ has got a structure of a
  Calabi-Yau category. Furthermore, the set of Calabi-Yau structures
  on $\cC$ stands in bijection to $(\K^*)^n$.
\end{lemma}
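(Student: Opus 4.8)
The plan is to show that a Calabi--Yau structure on $\cC$ is completely encoded by the scalars it assigns to the identity endomorphisms of the simple objects, and that these scalars may be prescribed arbitrarily subject only to non-vanishing. Fix representatives $S_1,\dots,S_n$ of the isomorphism classes of simple objects. Since $\K$ is algebraically closed, Schur's lemma gives $\End_\cC(S_i)\cong\K$, so a linear map $\tr^\cC_{S_i}\colon\End_\cC(S_i)\to\K$ is determined by the single scalar $\lambda_i:=\tr^\cC_{S_i}(\id_{S_i})$.

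First I would analyse the forward map, sending a Calabi--Yau structure to the tuple $(\lambda_1,\dots,\lambda_n)$. Restricting the non-degenerate pairing of Definition \ref{def:calabi-yau-cat} to $\Hom_\cC(S_i,S_i)\ot_\K\Hom_\cC(S_i,S_i)\cong\K\ot\K$ identifies it with $(a,b)\mapsto ab\,\lambda_i$, which is non-degenerate precisely when $\lambda_i\neq0$; hence the tuple lands in $(\K^*)^n$. To see that the whole structure is recovered from this tuple, decompose an arbitrary object as a finite direct sum of simples and apply Lemma \ref{lem:trace-matrix-form}: the trace of any $f\in\End_\cC(c)$ equals the sum of the traces of its diagonal blocks, each of which is a scalar multiple of an identity of some $S_i$. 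This expresses $\tr^\cC_c(f)$ purely in terms of the $\lambda_i$, so the forward map is injective.

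For surjectivity I would construct, for each tuple $(\lambda_i)\in(\K^*)^n$, an explicit trace. For an object $c$ and $f\in\End_\cC(c)$, set
\[
  \tr^\cC_c(f):=\sum_{i=1}^n \lambda_i\,\tr_\K\bigl(\Hom_\cC(S_i,f)\bigr),
\]
where $\Hom_\cC(S_i,f)\colon\Hom_\cC(S_i,c)\to\Hom_\cC(S_i,c)$ is post-composition with $f$ on the finite-dimensional multiplicity space $\Hom_\cC(S_i,c)$, and $\tr_\K$ is the ordinary trace of a vector-space endomorphism. This is manifestly linear and free of any choice of decomposition. Cyclicity follows from the functoriality identity $\Hom_\cC(S_i,g\circ f)=\Hom_\cC(S_i,g)\circ\Hom_\cC(S_i,f)$ together with the cyclicity of $\tr_\K$. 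For non-degeneracy, decomposing $c$ and $d$ into simples turns the induced pairing into a direct sum over $i$ of $\lambda_i$ times the standard trace pairing $\Mat_{n_i\times m_i}(\K)\ot_\K\Mat_{m_i\times n_i}(\K)\to\K$, $(A,B)\mapsto\tr_\K(BA)$, which is non-degenerate since every $\lambda_i\neq0$. A short computation via Lemma \ref{lem:trace-matrix-form} shows $\tr^\cC_{S_i}(\id_{S_i})=\lambda_i$, so this is a genuine preimage; existence of a Calabi--Yau structure is then the special case of, say, the tuple $(1,\dots,1)$.

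The point to get right is the interplay between the two preparatory lemmas and the non-degeneracy axiom. Lemma \ref{lem:trace-matrix-form}, which rests on Lemma \ref{lem:cy-additive}, is exactly what guarantees that a trace cannot carry more information than the scalars $\lambda_i$ (injectivity), while the reduction of the global pairing to a direct sum of standard trace pairings is what isolates non-degeneracy as the single condition $\lambda_i\in\K^*$ for all $i$. One should finally check that the $\lambda_i$ are independent of the chosen representatives $S_i$, which follows immediately from cyclicity applied to any isomorphism $S_i\to S_i'$; this makes the bijection canonical once representatives are fixed.
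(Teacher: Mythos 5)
Your proof is correct and follows essentially the same route as the paper: reduce the trace to the simple objects via Lemma \ref{lem:trace-matrix-form} and Schur's lemma, observe that non-degeneracy on $\End_\cC(S_i)\cong\K$ is exactly the condition $\lambda_i\in\K^*$, and recover the whole structure by additivity. The only difference is that where the paper says \enquote{one now checks} that the chosen scalars define a Calabi--Yau structure, you actually carry out that check with the explicit decomposition-free formula $\tr^\cC_c(f)=\sum_i\lambda_i\,\tr_\K(\Hom_\cC(S_i,f))$, which is a welcome addition rather than a departure.
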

\begin{proof}
  If $\cC$ has got the structure of a Calabi-Yau category, the trace
  $\tr^\cC$ will be additive by lemma
  \ref{lem:trace-matrix-form}. Hence, the trace $\tr^\cC$ is uniquely
  determined by the endomorphism algebras of the simple objects. If
  $X$ is a simple object of $\cC$, Schur's lemma shows that
  $\End_\cC(X) \cong \K$ as vector spaces, since the ground field $\K$
  was assumed to be algebraically closed and $\cC$ is finite. One now
  checks that choosing
  \begin{equation}
    \tr^\cC_X : \End_\cC(X) \cong \K \to \K
  \end{equation}
  to be the identity for every simple object $X$ indeed defines the
  structure of a Calabi-Yau category on $\cC$. This shows the first
  claim.

  Now note that for a simple object $X$, due to its symmetry the trace
  $\tr^\cC_X$ is unique up to multiplication with an invertible
  central element in $Z(\End_\cC(X)) \cong \K$. Thus, the trace
  $\tr^\cC_X$ on $\End_\cC(X)$ is unique up to a non-zero element in
  $\K$. Taking direct sums now shows the second claim.
\end{proof}

\section{Constructing the equivalence}
\label{sec:rep-construction}
The purpose of this section is to construct a weak 2-functor $\Rep^{\fg}:
\Frob \to \CY$, which sends a semisimple, symmetric Frobenius algebra
to its category of finitely generated modules.
\subsection{A Calabi-Yau structure on the representation category of a
  Frobenius algebra}
First, we will show that the category of finitely generated modules
over a semisimple symmetric Frobenius algebra has the structure of a
Calabi-Yau category in the sense of definition
\ref{def:calabi-yau-cat}, and thus construct $\Rep^{\fg}$ on objects.
 
Let us first recall some standard material about finitely generated
and projective $R$-modules. If $M$ is a left $A$-module, the dual
module $M^*:=\Hom_A(M,A)$ is a right $A$-module with right action given
by $(f.a)(m):=f(m).a$.
\begin{lemma}[Dual basis lemma]
  \label{lem:dual-basis}
  Let $R$ be a commutative ring, let $A$ be a $R$-algebra, and let $P$
  be a left $A$-module. The following are equivalent:
  \begin{enumerate}
  \item The module $P$ is finitely generated and projective.
  \item There are $f_1, \ldots, f_n \in P^*$ and $p_1, \ldots, p_n \in
    P$ (sometimes called dual- or projective basis of $P$) so that
    \begin{equation}
      \label{eq:def-dual-basis}
      x = \sum_{i=1}^n f_i (x).p_i
    \end{equation}
    for all $x \in P$.
  \item The map
    \begin{align}
      \begin{aligned}
        \Psi_{P,P} : P^* \ot_A P &\to \End_A(P) \\
        f \ot p & \mapsto (x \mapsto f(x).p)
      \end{aligned}
    \end{align}
    is an isomorphism of $R$-modules.
  \item For any other left $A$-module $M$, the map
    \begin{align}
      \label{eq:def-psi}
      \begin{aligned}
        \Psi_{P,M} : P^* \ot_A M &\to \Hom_A(P,M) \\
        f \ot m & \mapsto (x \mapsto f(x).m)
      \end{aligned}
    \end{align}
    is an isomorphism of $R$-modules.
  \end{enumerate}
\end{lemma}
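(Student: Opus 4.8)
The plan is to prove the four conditions equivalent by establishing the cycle of implications $(1) \To (2) \To (4) \To (3) \To (1)$, where the single genuinely new content beyond standard bookkeeping is the construction of an explicit two-sided inverse to $\Psi_{P,M}$ out of a dual basis.

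First I would prove $(1) \To (2)$. Since $P$ is finitely generated and projective, it is a retract of a free module: there are $A$-linear maps $\iota: P \to A^n$ and $\pi: A^n \to P$ with $\pi \circ \iota = \id_P$. Writing $e_1, \dots, e_n$ for the standard basis of $A^n$ and $e_i^*: A^n \to A$ for the coordinate functionals, which are left $A$-linear, I would set $f_i := e_i^* \circ \iota \in P^*$ and $p_i := \pi(e_i) \in P$. Then for $x \in P$ one has $\iota(x) = \sum_i f_i(x).e_i$, and applying $\pi$ gives $x = \sum_i f_i(x).p_i$, which is exactly the dual basis identity \eqref{eq:def-dual-basis}.

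The heart of the argument is $(2) \To (4)$. Given a dual basis $(f_i, p_i)$, I would define a candidate inverse
\begin{align}
  \begin{aligned}
    \Phi_{P,M}: \Hom_A(P,M) &\to P^* \ot_A M \\
    \varphi &\mapsto \sum_i f_i \ot \varphi(p_i)
  \end{aligned}
\end{align}
and check that it is two-sided inverse to $\Psi_{P,M}$. For one composite, $\Psi_{P,M}(\Phi_{P,M}(\varphi))(x) = \sum_i f_i(x).\varphi(p_i) = \varphi\big(\sum_i f_i(x).p_i\big) = \varphi(x)$, using $A$-linearity of $\varphi$ and \eqref{eq:def-dual-basis}. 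For the other, starting from $f \ot m$ I would move the scalar $f(p_i) \in A$ across the tensor product over $A$, obtaining $\Phi_{P,M}(\Psi_{P,M}(f \ot m)) = \sum_i f_i \ot f(p_i).m = \big(\sum_i f_i.f(p_i)\big) \ot m$; the key identity $\sum_i f_i.f(p_i) = f$ then follows by evaluating both sides on $x$ and again using \eqref{eq:def-dual-basis} in the form $f(x) = \sum_i f_i(x).f(p_i)$.

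Finally, $(4) \To (3)$ is immediate by specializing $M := P$, and for $(3) \To (1)$ I would take the preimage $\sum_i f_i \ot p_i$ of $\id_P$ under $\Psi_{P,P}$; unwinding $\Psi_{P,P}(\sum_i f_i \ot p_i) = \id_P$ recovers the dual basis identity, and the maps $x \mapsto (f_1(x), \dots, f_n(x))$ and $(a_1, \dots, a_n) \mapsto \sum_i a_i.p_i$ exhibit $P$ as a retract of $A^n$, so $P$ is finitely generated and projective. The main obstacle is purely notational rather than conceptual: one must track carefully the left $A$-module structure on $P$, the right $A$-module structure on $P^*$, and the tensor product over $A$ rather than over $R$, since it is precisely the relation $f \ot (a.m) = (f.a) \ot m$ that makes the identity $\sum_i f_i.f(p_i) = f$ do its work.
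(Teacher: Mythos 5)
Your proof is correct. It is worth noting, though, that the paper itself does not actually prove the two substantive implications: it cites Lam for the equivalence $(1) \Leftrightarrow (2)$ and Adkins--Weintraub for $(1) \To (4)$, and only writes out the easy steps $(4) \To (3)$ (specialize $M:=P$) and $(3) \To (2)$ (check that $\Psi_{P,P}^{-1}(\id_P)$ is a dual basis). Your argument makes the lemma self-contained by supplying exactly the content those references contain: the retract-of-$A^n$ construction for $(1) \To (2)$, and, most importantly, the explicit two-sided inverse $\Phi_{P,M}(\varphi) = \sum_i f_i \ot \varphi(p_i)$ for $(2) \To (4)$, whose verification correctly hinges on moving the scalar $f(p_i)$ across $\ot_A$ using the right $A$-action $(f.a)(x) = f(x).a$ on $P^*$ --- the one point where a sign-convention slip could occur, and which you handle properly. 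Your closing step $(3) \To (1)$ also goes slightly further than the paper's $(3) \To (2)$, extracting projectivity directly from $\Psi_{P,P}^{-1}(\id_P)$ by exhibiting $P$ as a retract of $A^n$; combined with your cycle $(1) \To (2) \To (4) \To (3) \To (1)$ this closes the equivalence without any appeal to the literature. In short: same underlying mathematics as the cited sources, but written out in full where the paper defers.
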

\begin{proof}
  The equivalence of $(1)$ and $(2)$ is proven in \cite[Lemma
  2.9]{lam2012lectures}. The implication $(1) \To (4)$ is
  \cite[Proposition 2.32]{adkins-weintraub-1992algebra}.

  $(4) \To (3)$ is trivial, since we may choose $M:=P$.

  $(3) \To (2)$: Suppose that $\Psi_{P,P}: P^* \ot_A P \to \End_A(P)$
  is an isomorphism. Then, a quick calculation confirms that
  $\Psi_{P,P}^{-1}(\id_P)$ is a dual basis.
\end{proof}

\begin{cor}
  \label{cor:psi-invertible-over-k}
  Let $A$ be a semisimple algebra, and let $M$ be a
  finitely generated $A$-module. Then, $\Psi_{M,M}: M^* \ot_A M
  \to \End_A(M) $ is an isomorphism.
\end{cor}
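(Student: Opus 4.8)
The plan is to reduce the statement to the Dual Basis Lemma. Lemma \ref{lem:dual-basis} is stated over an arbitrary commutative base ring $R$; here I would take $R := \K$, the ground field over which $A$ is an algebra, and $P := M$. The equivalence of conditions $(1)$ and $(3)$ in that lemma says precisely that the map $\Psi_{M,M}: M^* \ot_A M \to \End_A(M)$ is an isomorphism if and only if $M$ is finitely generated and projective as an $A$-module. Hence it suffices to verify these two properties for $M$.

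Finite generation holds by hypothesis, so the only thing to establish is projectivity. For this I would invoke the defining property of a semisimple algebra: a $\K$-algebra $A$ is semisimple exactly when every $A$-module decomposes as a direct sum of simple modules, equivalently when every short exact sequence of $A$-modules splits. In particular every $A$-module is a direct summand of a free module, hence projective; applying this to $M$ shows that $M$ is projective. Combining the two observations, $M$ is finitely generated and projective, and the cited equivalence $(1) \Leftrightarrow (3)$ in Lemma \ref{lem:dual-basis} immediately yields that $\Psi_{M,M}$ is an isomorphism.

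There is no genuine obstacle in this argument: the whole content is the standard homological fact that semisimplicity forces every module to be projective, after which the corollary is a direct specialization of the preceding lemma. The only point deserving a moment of care is the bookkeeping that the commutative ring in the Dual Basis Lemma is the ground field $\K$, so that $M^* \ot_A M$ and $\End_A(M)$ are being compared as $\K$-vector spaces — which is exactly the form in which the corollary is asserted.
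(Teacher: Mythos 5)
Your argument is exactly the paper's: semisimplicity of $A$ forces every module to be projective, so the finitely generated module $M$ satisfies condition $(1)$ of Lemma \ref{lem:dual-basis}, and the equivalence $(1)\Leftrightarrow(3)$ gives that $\Psi_{M,M}$ is an isomorphism. Your version just spells out the standard splitting argument for projectivity and the choice $R=\K$, which the paper leaves implicit; there is nothing to correct.
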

\begin{proof}
 As every $A$-module is projective by assumption, the corollary
 follows directly from lemma \ref{lem:dual-basis}.
\end{proof}
This corollary enables us to define a trace for finitely-generated
modules over a semisimple symmetric Frobenius algebra.

% \begin{remark}
%   Since $\End_A(M)$ has got the structure of an algebra by setting
%   $f \cdot g := g \circ f$, we may transport the structure to $M^*
%   \ot_A M$. The algebra structure on $M^* \ot_A M$ is given by $(f
%   \ot m) \cdot (f' \ot m'):= f \ot f'(m).m'$. This makes $\Psi$ into
%   a morphism of algebras. Indeed,
%   \begin{align}
%     \begin{aligned}
%       \Psi(f' \ot m' ) \circ \Psi(f \ot m) &=  (x \mapsto f'(x).m') \circ (x \mapsto f(x).m) \\ &=(x \mapsto f'(f(x).m).m')= (x \mapsto f(x) f'(m).m') \\
%       & =\Psi(f \ot f'(m).m')= \Psi((f \ot m) \cdot (f' \ot m')).
%     \end{aligned}
%   \end{align}
% \end{remark}

\begin{newdef}
\label{cy-structure-rep}
Let $(A,\lambda)$ be a semisimple symmetric Frobenius algebra with
Frobenius form $\lambda: A \to \K$. Let $M$ be a finitely-generated
left $A$-module. Denote by
  \begin{align}
    \begin{aligned}
      \ev: M^* \ot_A M &\to A \\
      f \ot m & \mapsto f(m)
    \end{aligned}
  \end{align}
  the evaluation.

  Since $M$ is finitely generated, the map $\Psi_{M,M}: \End_A(M) \to
  M^* \ot_A M$ is an isomorphism by corollary
  \ref{cor:psi-invertible-over-k}. We define a \emph{trace}
  $\tr_M^\lambda: \End_A(M) \to \K$ by the composition
  \begin{align}
    \label{eq:def-trace}
    \tr_M^\lambda: \End_A(M) \xrightarrow{\Psi_{M,M}^{-1}} M^* \ot_A M
    \xrightarrow{\ev} A \xrightarrow{\lambda} \K.
  \end{align}
\end{newdef}

\begin{remark}
  As defined here, the trace $\tr_M^\lambda$ is the composition of the
  Hattori-Stallings trace with the Frobenius form $\lambda$. For more
  on the Hattori-Stallings trace, see \cite{hattori1965},
  \cite{Stallings65} and \cite{bass-euler76}.
\end{remark}
\begin{ex}
  Let $(A, \lambda)$ be a semisimple symmetric Frobenius algebra.
  Suppose that $F$ is a free $A$-module with basis $e_1, \ldots,
  e_n$. Then,
    \begin{equation}
      \tr^\lambda_F(\id_F) = n \lambda(1_A).  
    \end{equation}
  \end{ex}
\begin{ex}
  \label{ex:rank-of-k^n}
  As a second example, and let $A:= M_n(\K)$ be the algebra of $n
  \times n$-matrices over $\K$ with Frobenius form $\lambda$ given by
  the usual trace of matrices. Then, $M:=\K^n$ is a projective (but
  not free), simple $A$-module. We claim:
  \begin{equation}
    \tr^\lambda_{M}( \id_M) = 1.
  \end{equation}
  Indeed, let $e_1, \ldots, e_n$ be a vector space basis of
  $\K^n$. This basis also generates $\K^n$ as an $A$-module. Define
  for each $1 \leq i \leq n$ a $\K$-linear map $f_i^* : \K^n \to
  M_n(\K)=A$ by setting
  \begin{equation}
    f_i^*(e_k) := \delta_{i,1} E_{k,1},
  \end{equation}
  where $E_{k,1}$ is the square matrix with $(k,1)$-entry given by one
  and zero otherwise.  A short calculation confirms that the $f_i^*$
  are even morphisms of $A$-modules. % Indeed, if $M \in A$, then
  % \begin{equation}
  %   (M.f_i^*(e_k))_{p,q}= \delta_{1,i} \delta_{1,q} M_{p.k} =
  %   (f_i^*(M.e_k))_{p,q}.
    % \end{equation}
  Next, we claim that
  \begin{equation}
    \Psi^{-1}_{M,M} = \sum_{i=1}^n f_i^* \ot e_i \in M^* \ot_A M.
  \end{equation}
  Indeed,
  \begin{equation}
    \begin{aligned}
      \Psi_{M,M} \left( \sum_{i=1}^n f_i^* \ot e_i
      \right) (e_k) &= \sum_{i=1}^n f_i^*(e_k).e_i \\
      & = \sum_{i=1}^n
      \delta_{i,1} E_{k,1} e_i \\
      & = E_{k,1} e_1 =e_k.
    \end{aligned}
  \end{equation}
  Thus,
  \begin{equation}
    \tr^\lambda_M( \id_M) =  \lambda \left( \sum_{i=1}^n  f_i^*(e_i) \right) = \lambda \left(\sum_{i=1}^n E_{i,1} \delta_{1,i} \right) =\lambda (E_{1,1}) =1.
  \end{equation}
\end{ex}

Next, we show that $\tr_M^\lambda$ has indeed the properties of a
trace. In order to show that the trace is symmetric, we need an
additional lemma first, which can be proven by a small calculation.

\begin{lemma}
  \label{lem:psi-composition}
  Let $A$ be an $\K$-algebra, and let $M$ and $N$ be left
  $A$-modules. Define a linear map
  \begin{equation}
    \begin{aligned}
      \xi: (M^* \ot_A N) \times (N^* \ot_A M) &\to M^* \ot_A M \\
      (f \ot n, g \ot m) & \mapsto f \ot g(n).m
    \end{aligned}
  \end{equation}
  Then, the following diagram commutes:
  \begin{equation}
    \begin{tikzcd}
      (M^* \ot_A N) \times (N^* \ot_A M) \rar{\xi} \dar{\Psi_{M,N} \times \Psi_{N,M}} & M^* \ot_A M \dar{\Psi_{M,M}}\\
      \Hom_A(M,N) \times \Hom_A(N,M) \rar{\circ} & \Hom_A(M,M).
    \end{tikzcd}
  \end{equation}
  Here, the horizontal map at the bottom is given by composition of
  morphisms of $A$-modules and $\Psi_{M,M}$ is defined as in equation
  \eqref{eq:def-psi}.
\end{lemma}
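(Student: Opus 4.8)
The plan is to verify the asserted identity $\Psi_{M,M} \circ \xi = {\circ} \circ (\Psi_{M,N} \times \Psi_{N,M})$ by a direct diagram chase on elementary tensors, evaluating both resulting endomorphisms of $M$ at an arbitrary element $x \in M$. Concretely, I fix a pair $(f \ot n, g \ot m) \in (M^* \ot_A N) \times (N^* \ot_A M)$ and compute the two ways around the square. Going across via $\xi$ first, we have $\xi(f \ot n, g \ot m) = f \ot g(n).m$, and applying $\Psi_{M,M}$ yields the endomorphism $x \mapsto f(x).(g(n).m)$. Going down first, $\Psi_{M,N}(f \ot n)$ is the morphism $x \mapsto f(x).n$ and $\Psi_{N,M}(g \ot m)$ is the morphism $y \mapsto g(y).m$, so their composite in the order dictated by the bottom arrow sends $x$ to $g(f(x).n).m$.

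The key step is then the single identity that makes these two expressions agree. Since $g \in N^* = \Hom_A(N,A)$ is a morphism of left $A$-modules and $f(x) \in A$, we have $g(f(x).n) = f(x).g(n) = f(x)g(n)$, the last product being taken in $A$. Hence $g(f(x).n).m = (f(x)g(n)).m = f(x).(g(n).m)$, which is exactly the value produced by the other path. As $x \in M$ was arbitrary the two endomorphisms coincide, so the square commutes on elementary tensors, and since both composites are bilinear this is all that is needed.

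The part that actually requires care — and the main obstacle, such as it is — is checking that $\xi$ is well defined as a map out of the two tensor products over $A$, i.e.\ that the formula $(f \ot n, g \ot m) \mapsto f \ot g(n).m$ respects the balancing relations. For the first factor one verifies $\xi(f.a \ot n, g \ot m) = \xi(f \ot a.n, g \ot m)$ for $a \in A$: the left-hand side is $(f.a) \ot g(n).m = f \ot a.(g(n).m)$ in $M^* \ot_A M$, while the right-hand side is $f \ot g(a.n).m = f \ot (a.g(n)).m$, and these agree because $g$ is $A$-linear and $a.(g(n).m) = (a\,g(n)).m$. The analogous check in the second factor uses the right-module structure $(g.a)(n) = g(n).a$ on $N^*$ together with $g(n).(a.m) = (g(n)a).m$. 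Once this well-definedness is established, the computation of the previous paragraph completes the proof; beyond the module axioms and the $A$-linearity of the duals, no further input is required.
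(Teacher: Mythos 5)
Your proof is correct and is exactly the ``small calculation'' the paper alludes to (it gives no written proof of this lemma): both paths send $(f\ot n, g\ot m)$ to the endomorphism $x\mapsto f(x).(g(n).m)=g(f(x).n).m$, using the left $A$-linearity of $g\in N^*=\Hom_A(N,A)$. Your additional check that $\xi$ respects the balancing relations over $A$ in each factor is a welcome point of care that the paper leaves implicit.
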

%\begin{proof}
 %  We calculate:
%   \begin{align}
%     \label{eq:66}
%     \begin{split}
%       (\Psi_{M,M} \circ \xi) ( f\ot n, g \ot m)&= \Psi(f \ot g(n).m)\\
%       &= x \mapsto f(x)g(n).m.
%     \end{split}
%     \intertext{On the other hand,}
%     \label{eq:67}
%     \begin{split}
%       \Psi_{M,N}(g \ot m) \circ \Psi_{N,M}(f \ot n)&= (x \mapsto
%       g(x).m) \circ (x \mapsto f(x).n) = x \mapsto g(f(x).n).m \\ & =
%       x \mapsto f(x)g(n).m
%     \end{split}
%   \end{align}
%   Comparing the right hand-side of equation \eqref{eq:66} with the
%   right hand-side of equation \eqref{eq:67} shows that the diagram
%   commutes.
% \end{proof}
We are now ready to show that the trace is symmetric:
\begin{lemma}
  \label{lem:trace-symmetric}
  Let $(A, \lambda)$ be a semisimple, symmetric Frobenius algebra.
  Let $M$ and $N$ be finitely-generated $A$-modules, and let $f:M \to
  N$ and $g: N \to M$ be morphisms of $A$-modules. Then,
  \begin{equation}
    \tr_M^\lambda(g \circ f) = \tr_N^\lambda(f \circ g).
  \end{equation}
\end{lemma}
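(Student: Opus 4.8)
The plan is to unwind the definition $\tr_M^\lambda = \lambda \circ \ev \circ \Psi_{M,M}^{-1}$ on both sides and reduce $\tr_M^\lambda(g \circ f)$ and $\tr_N^\lambda(f \circ g)$ to explicit sums over dual bases that differ only by the order of a product taken inside $\lambda$; the symmetry hypothesis $\lambda(ab) = \lambda(ba)$ then finishes the proof in one line. The essential structural input is Lemma \ref{lem:psi-composition}, which records exactly how the isomorphisms $\Psi$ intertwine the map $\xi$ with composition of $A$-linear maps.

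First I would set $\alpha := \Psi_{M,N}^{-1}(f) \in M^* \ot_A N$ and $\beta := \Psi_{N,M}^{-1}(g) \in N^* \ot_A M$. The commuting square of Lemma \ref{lem:psi-composition} gives $\Psi_{M,M}(\xi(\alpha,\beta)) = g \circ f$, so that $\Psi_{M,M}^{-1}(g \circ f) = \xi(\alpha,\beta)$; applying the same lemma with the roles of $M$ and $N$ interchanged yields $\Psi_{N,N}^{-1}(f \circ g) = \xi(\beta,\alpha)$ for the corresponding map $\xi\colon (N^* \ot_A M) \times (M^* \ot_A N) \to N^* \ot_A N$. Next I would choose representatives $\alpha = \sum_i f_i \ot n_i$ and $\beta = \sum_j g_j \ot m_j$ and unwind the definition of $\xi$ to obtain
\[
  \Psi_{M,M}^{-1}(g \circ f) = \sum_{i,j} f_i \ot g_j(n_i).m_j, \qquad \Psi_{N,N}^{-1}(f \circ g) = \sum_{i,j} g_j \ot f_i(m_j).n_i .
\]
Applying $\ev$ and using that $f_i \in M^*$ and $g_j \in N^*$ are left $A$-linear, so that the scalars $g_j(n_i), f_i(m_j) \in A$ pull through the evaluation as products in $A$, gives
\[
  \ev\bigl(\Psi_{M,M}^{-1}(g \circ f)\bigr) = \sum_{i,j} g_j(n_i)\, f_i(m_j), \qquad \ev\bigl(\Psi_{N,N}^{-1}(f \circ g)\bigr) = \sum_{i,j} f_i(m_j)\, g_j(n_i) .
\]

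Finally, applying $\lambda$ and invoking $\lambda(ab) = \lambda(ba)$ with $a = g_j(n_i)$ and $b = f_i(m_j)$ equates the two sums term by term, yielding $\tr_M^\lambda(g \circ f) = \tr_N^\lambda(f \circ g)$. The main obstacle is purely one of bookkeeping rather than depth: one must apply Lemma \ref{lem:psi-composition} in the correct variance for each of the two composites, and carefully track the left $A$-linearity of the dual-module elements when moving the scalar $g_j(n_i)$ through $\ev$. Once both traces are written as sums of terms of the form $\lambda$ of a product in $A$, the symmetry of the Frobenius form is the single nontrivial ingredient, and this is precisely the place where the hypothesis that $(A,\lambda)$ is symmetric enters.
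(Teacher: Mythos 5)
Your proposal is correct and takes essentially the same route as the paper's proof: both apply Lemma \ref{lem:psi-composition} once for each composite, use the left $A$-linearity of the dual-module elements to rewrite each trace as a sum of terms $\lambda(ab)$ versus $\lambda(ba)$, and conclude by the symmetry of the Frobenius form. The only difference is notational (single-index versus double-index expansions of the tensors), so there is nothing to add.
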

\begin{proof}
  Suppose that
  \begin{equation}
    \begin{aligned}
      \Psi_{M,N}^{-1}(f)&= \sum_{i,j} m_i^* \ot n_j \in M^* \ot_A N \qquad \text{and}\\
      \Psi_{N,M}^{-1}(g)&= \sum_{k,l} x_k^* \ot y_l \in N^* \ot_A M.
    \end{aligned}
  \end{equation}
  We calculate:
  \begin{align}
    \label{eq:68}
    \begin{aligned}
      \tr_M^\lambda(g \circ f) &= (\lambda \circ \ev \circ \Psi_{M,M}^{-1})(g \circ f)   \\ & =  (\lambda \circ \ev) \left( \sum_{i,j,k,l} m_i^* \ot x_k^*(n_j).y_l \right) \qquad &&  \text{(by lemma  \ref{lem:psi-composition})} \\ &= \lambda \left(\sum_{i,j,k,l} m_i^*(x_k^*(n_j).y_l) \right) \\
      &=\sum_{i,j,k,l} \lambda(x_k^*(n_j) \cdot m_i^*(y_l)).
    \end{aligned}
    \intertext{On the other hand,}
    \begin{aligned}
      \label{eq:69}
      \tr_N^\lambda(f\circ g) &= (\lambda \circ \ev \circ \Psi_{N,N}^{-1})(f \circ g) \\
      & =(\lambda \circ \ev) \left( \sum_{i,j,k,l} x_k^* \ot m_i^*(y_l).n_j \right)  \qquad &&  \text{(by lemma  \ref{lem:psi-composition})}\\
      & = \lambda \left(\sum_{i,j,k,l}x_k^*(m_i^*(y_l).n_j) \right) \\
      & = \sum_{i,j,k,l} \lambda(m_i^*(y_l) \cdot x_k^*(n_j)).
    \end{aligned}
  \end{align}
  Since $\lambda$ is symmetric, the right hand-sides of equations
  \eqref{eq:68} and \eqref{eq:69} agree. This shows that the trace is
  symmetric.
\end{proof}

Next, we would like to show that the trace is non-degenerate. We first
recall a preparatory lemma.
\begin{lemma}[{\cite[Lemma 2.2.11]{kock2004frobenius}}]
  \label{lem:symmetric-frobenius-form-unique-up-to-multiple}
  Let $(A, \lambda)$ be a symmetric Frobenius algebra. Then, every
  other symmetric Frobenius form on $A$ is given by multiplication
  with a central invertible element of $A$.
\end{lemma}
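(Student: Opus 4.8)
The plan is to use the non-degeneracy of $\lambda$ to represent any competing form as $\lambda$ twisted by multiplication by a single element $z$, and then to read off the two required properties of $z$—centrality and invertibility—from the symmetry of both forms and the non-degeneracy of both pairings, respectively. The conceptual input is minimal: everything reduces to the fact that non-degeneracy of a Frobenius form $\lambda$ means precisely that the linear map $L_\lambda\colon A \to A^*$, $a \mapsto \lambda(a\,\cdot\,-)$, is an isomorphism (which in particular forces $A$ to be finite-dimensional).

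First I would set up the candidate element. Given a second symmetric Frobenius form $\mu\colon A \to \K$, i.e.\ an element $\mu \in A^*$, the isomorphism $L_\lambda$ produces a unique $z \in A$ with $L_\lambda(z) = \mu$, that is $\mu(b) = \lambda(zb)$ for all $b \in A$. This $z$ is the element asserted by the lemma; it remains only to verify that it is central and invertible.

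Next I would establish centrality. Since the pairing $(x,b)\mapsto\lambda(xb)$ has trivial left radical, it suffices to show $\lambda\bigl((za-az)b\bigr)=0$ for all $a,b\in A$. Expanding gives $\lambda(zab)-\lambda(azb)$; here $\lambda(zab)=\mu(ab)$ directly, while the symmetry of $\lambda$ rewrites $\lambda(azb)=\lambda(zba)=\mu(ba)$. The difference is $\mu(ab)-\mu(ba)$, which vanishes because $\mu$ is symmetric, so $za=az$ for every $a$ and hence $z\in Z(A)$. For invertibility I would run the same representation with the roles of the two forms exchanged—legitimate because $\mu$ is non-degenerate, so $L_\mu$ is also an isomorphism—obtaining $w\in A$ with $\lambda(b)=\mu(wb)$ for all $b$. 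Substituting the formula for $\mu$ yields $\lambda(b)=\lambda(zwb)$ for all $b$, and non-degeneracy of $\lambda$ forces $zw=1$; since $z$ is central, $wz=zw=1$ as well, so $z$ is invertible.

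I do not expect a genuine obstacle here, as the statement is essentially a linear-algebra fact dressed in the Frobenius structure. The only points demanding care are bookkeeping the left/right conventions so that ``multiplication by $z$'' is applied consistently throughout, and remembering that the symmetry of $\lambda$ is exactly what permits moving the factor $a$ past $zb$ in the centrality computation—that cyclic move, together with the symmetry of $\mu$, is the whole crux of the argument.
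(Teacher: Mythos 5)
Your argument is correct and complete: representing $\mu$ as $\lambda(z\,\cdot\,-)$ via the isomorphism $A\cong A^*$, deducing centrality of $z$ from the symmetry of both forms, and inverting $z$ by running the construction with the roles of $\lambda$ and $\mu$ exchanged is exactly the standard proof. The paper itself gives no proof, citing \cite[Lemma 2.2.11]{kock2004frobenius}, and the argument there proceeds along the same lines, so there is nothing to reconcile.
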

We are now ready to show that the trace is non-degenerate.
\begin{lemma}
  \label{trace-non-deg}
  Let $(A, \lambda)$ be a semisimple, symmetric Frobenius algebra, and
  let $M$ and $N$ be finitely-generated $A$-modules. Then, the
  bilinear pairing of vector spaces induced by the trace in definition
  \ref{cy-structure-rep}
  \begin{equation}
    \begin{aligned}
      \langle - , - \rangle : \Hom_A(M,N) \times \Hom_A(N,M) & \to \K \\
      (f,g) & \mapsto \tr_M^\lambda(g \circ f)
    \end{aligned}
  \end{equation}
  is non-degenerate.
\end{lemma}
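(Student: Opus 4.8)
The plan is to prove non-degeneracy by exhibiting, for each nonzero $f \in \Hom_A(M,N)$, a $g \in \Hom_A(N,M)$ with $\tr_M^\lambda(g \circ f) \neq 0$ (and symmetrically in the other variable, though by Lemma \ref{lem:trace-symmetric} one side suffices up to swapping $M$ and $N$). Since $A$ is semisimple, I would first reduce to the case of simple modules: every finitely-generated module decomposes as a direct sum of simples, and by the matrix-form computation of Lemma \ref{lem:trace-matrix-form} together with the additivity of the trace, the pairing on $\Hom_A(M,N) \times \Hom_A(N,M)$ is block-diagonal with respect to these decompositions. Thus non-degeneracy of the full pairing follows from non-degeneracy on each pair of simple summands.

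For simple modules $S$ and $T$, Schur's lemma (using that $\K$ is algebraically closed) gives $\Hom_A(S,T) = 0$ unless $S \cong T$, in which case $\End_A(S) \cong \K$. When $S \not\cong T$ both $\Hom$-spaces vanish and non-degeneracy is vacuous. When $S \cong T$, the pairing reduces to the bilinear form $(f,g) \mapsto \tr_S^\lambda(g \circ f)$ on $\End_A(S) \times \End_A(S) \cong \K \times \K$, so it suffices to check that $\tr_S^\lambda(\id_S) \neq 0$: then for $f = \alpha \cdot \id_S$ and $g = \beta \cdot \id_S$ the pairing is $\alpha\beta \, \tr_S^\lambda(\id_S)$, which is non-degenerate precisely when $\tr_S^\lambda(\id_S) \neq 0$.

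So the crux is to show $\tr_S^\lambda(\id_S) \neq 0$ for every simple module $S$. Unwinding Definition \ref{cy-structure-rep}, $\tr_S^\lambda(\id_S) = \lambda\bigl(\ev(\Psi_{S,S}^{-1}(\id_S))\bigr)$, and by Lemma \ref{lem:dual-basis} the element $\Psi_{S,S}^{-1}(\id_S) = \sum_i f_i \ot p_i$ is precisely a dual basis, so $\ev$ sends it to $\sum_i f_i(p_i) \in A$. This is the Hattori-Stallings rank of $S$, and I must argue $\lambda$ does not annihilate it. Here I would invoke Lemma \ref{lem:symmetric-frobenius-form-unique-up-to-multiple}: any symmetric Frobenius form differs from $\lambda$ by multiplication by a central invertible element, so it is enough to produce \emph{one} symmetric Frobenius form that is nonzero on the Hattori-Stallings rank of $S$, and then transport the conclusion. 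Concretely, by semisimplicity $A \cong \prod_k M_{n_k}(\K)$ and each simple $S$ is supported on a single matrix factor, where Example \ref{ex:rank-of-k^n} computes that the rank pairs nontrivially with the matrix-trace form; comparing this standard form with $\lambda$ via the central-multiple lemma forces $\lambda$ of the rank to be nonzero as well.

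The main obstacle I anticipate is the last step: controlling the interaction between the abstract Frobenius form $\lambda$ and the Hattori-Stallings rank of $S$. It is conceivable a priori that $\lambda$ vanishes on this particular central idempotent-like element even though the matrix-trace form does not. The resolution is that the central invertible element $z$ relating $\lambda$ to the standard form acts as a nonzero scalar on each matrix block $M_{n_k}(\K)$ (since the center of each block is $\K$ and $z$ is invertible), so multiplication by $z$ cannot kill a rank element supported on a single block. I would therefore carry out the block decomposition explicitly enough to see that $\lambda$ restricted to the center is given by a tuple of nonzero scalars, which combined with Example \ref{ex:rank-of-k^n} yields $\tr_S^\lambda(\id_S) \neq 0$ and completes the argument.
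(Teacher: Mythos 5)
Your proposal is correct and follows essentially the same route as the paper: Artin--Wedderburn plus Lemma \ref{lem:symmetric-frobenius-form-unique-up-to-multiple} to write $\lambda$ as a tuple of nonzero multiples of matrix traces, decomposition of $M$ and $N$ into simples with Schur's lemma, reduction via additivity and Lemma \ref{lem:trace-matrix-form} to the simple-module blocks, and Example \ref{ex:rank-of-k^n} to see that $\tr^\lambda_{\K^{n_i}}(\id)=\lambda_i\neq 0$. The only cosmetic difference is that the paper exhibits the dual element $g$ explicitly (scaling the inverse of a nonzero component of $f$ by $\lambda_{\tilde i}^{-1}$) where you phrase the same step as non-vanishing of the one-dimensional pairing on $\End_A(S)$.
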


\begin{proof}
  By Artin-Wedderburn's theorem, the algebra $A$ is isomorphic to a
  direct product of matrix algebras over $\K$:
  \begin{equation}
    A \cong \prod_{i=1}^r M_{n_i}(\K).
  \end{equation}
  Since the sum of the usual trace of matrices gives each $A$ the
  structure of a symmetric Frobenius algebra, lemma
  \ref{lem:symmetric-frobenius-form-unique-up-to-multiple} shows that
  the Frobenius form $\lambda$ of $A$ is given by
  \begin{equation}
    \lambda = \sum_{i=1}^r \lambda_i \tr_i,
  \end{equation}
  where $\tr_i : M_{n_i}(\K) \to \K$ is the usual trace of matrices
  and $\lambda_i \in \K^*$ are non-zero scalars.

  Recall that a module over a finite-dimensional algebra is
  finite-dimensional (as a vector space) if and only if it is finitely
  generated as a module, cf. \cite[Proposition
  2.5]{frobenius-algebras-i}.  A classical theorem in representation
  theory (cf. theorem 3.3.1 in \cite{eghlsvy11}) asserts that the only
  finite-dimensional simple modules of $A$ are given by $V_1 :=
  \K^{n_1}, \ldots V_r:= \K^{n_r}$. Since the category
  $(\Mod{A}{})^{\fg}$ is semisimple, we may decompose the
  finitely-generated $A$-modules $M$ and $N$ as the direct sum of
  simple modules:
  \begin{align}
    \begin{aligned}
      M &\cong \left(  \bigoplus_{i_1=1}^{l_1} \K^{n_1}  \right) \oplus  \left(  \bigoplus_{i_2=1}^{l_2} \K^{n_2}  \right) \oplus \cdots \oplus  \left(  \bigoplus_{i_r=1}^{l_r} \K^{n_r}  \right) \\
      N & \cong \left( \bigoplus_{i_1=1}^{l_1'} \K^{n_1} \right)
      \oplus \left( \bigoplus_{i_2=1}^{l_2'} \K^{n_2} \right) \oplus
      \cdots \oplus \left( \bigoplus_{i_r=1}^{l_r'} \K^{n_r} \right).
    \end{aligned}
  \end{align}
  By Schur's lemma, any $f \in \Hom_A(M,N)$ is given by $f=f_1 \oplus
  f_2 \oplus \ldots \oplus f_r$ where $f_i $ is a $l_i' \times
  l_i$-matrix. Similarly, any $g \in \Hom_A(N,M)$ is given by $g=g_1
  \oplus g_2 \oplus \ldots \oplus g_r$ where each $g_i$ is a $l_i
  \times l_i'$ matrix.  Thus,
  \begin{equation}
    \begin{aligned}
      \tr^\lambda_M(g \circ f)& = \tr^\lambda_{M}((g_1 f_1) \oplus (g_2 f_2) \oplus \ldots \oplus g_r f_r) \\
      &= \sum_{i=1}^r \tr^\lambda_{ (\K^{n_i})^{l_i}}(g_i f_i)  && \text{(by additivity)} \\
      &= \sum_{i=1}^r \sum_{j=1}^{l_i} \tr^\lambda_{\K^{n_i}}( (g_i f_i)_{j,j}) && \text{(by lemma \ref{lem:trace-matrix-form})} \\
      &=\sum_{i=1}^r \sum_{j=1}^{l_i} \sum_{k=1}^{l_i'}
      \tr^\lambda_{\K^{n_i}} \left((g_i)_{j,k} \circ (f_i)_{k,j}
      \right)
    \end{aligned}
  \end{equation}
  Since $f$ was assumed to be non-zero, at least one $(f_i)_{j,k}$ is
  non-zero. Suppose that $(f_{\tilde i})_{\tilde j, \tilde k}
  \in \End_A(\K^{n_i})$ is not the zero morphism. By Schur's lemma,
  $(f_{\tilde i})_{\tilde j, \tilde k}$ is an isomorphism. Now define
  $g \in \Hom_A(N,M)$ as
  \begin{equation}
    (g_i)_{j,k}:= (\lambda_{\tilde i})^{-1} \delta_{i, \tilde i} \delta_{j, \tilde j} \delta_{k, \tilde k} (f_{\tilde i})_{\tilde k, \tilde j}^{-1}.
  \end{equation}
  Then, by example \ref{ex:rank-of-k^n},
  \begin{equation}
    \begin{aligned}
      \tr^\lambda_M(g \circ f) = (\lambda_{\tilde
        i})^{-1}\tr^\lambda_{\K^{n_{\tilde i}}}(\id_{\K^{n_{\tilde
            i}}}) = 1_\K \neq 0.
    \end{aligned}
  \end{equation}
\end{proof}

We summarize the situation with the following proposition:
\begin{prop}
  \label{prop:a-mod-is-cy}
  Let $(A, \lambda)$ be a semisimple symmetric Frobenius
  algebra. Then, the category of finitely-generated $A$-modules
  $(\Mod{A}{})^{\fg}$ has got the structure of a Calabi-Yau category
  with trace $\tr_M^\lambda: \End_A(M) \to \K$ as defined in equation
  \eqref{eq:def-trace}.
\end{prop}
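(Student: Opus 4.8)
The plan is to unwind Definition \ref{def:calabi-yau-cat} and verify each of its requirements for $(\Mod{A}{})^{\fg}$, drawing on the lemmas already established. There are really two things to check: first that the underlying category is linear, finite and semisimple, and second that the family of maps $\tr_M^\lambda$ from Definition \ref{cy-structure-rep} satisfies the cyclicity and non-degeneracy axioms.

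First I would treat the categorical properties. Since $A$ is semisimple over the algebraically closed field $\K$, Artin-Wedderburn gives $A \cong \prod_{i=1}^r M_{n_i}(\K)$, so $(\Mod{A}{})^{\fg}$ is abelian and enriched over $\Vect$, hence linear. Every finitely-generated module is finite-dimensional as a $\K$-vector space, which makes the $\Hom$-spaces finite-dimensional and guarantees that every object has finite length; semisimplicity of $A$ means every module is projective, so the category trivially has enough projectives; and the simple modules are exhausted by $V_1 = \K^{n_1}, \ldots, V_r = \K^{n_r}$, so there are only finitely many isomorphism classes of simples. Thus the category is finite in the sense of Definition \ref{def:finite-cat} and semisimple.

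Next I would assemble the trace data. By Corollary \ref{cor:psi-invertible-over-k} the map $\Psi_{M,M}$ is invertible for every finitely-generated $M$, so the composite $\tr_M^\lambda = \lambda \circ \ev \circ \Psi_{M,M}^{-1}$ of Definition \ref{cy-structure-rep} is a well-defined linear map $\End_A(M) \to \K$ for each object $M$. The cyclicity axiom $\tr_M^\lambda(g \circ f) = \tr_N^\lambda(f \circ g)$ is exactly Lemma \ref{lem:trace-symmetric}, and the non-degeneracy of the induced pairing $\langle f, g \rangle = \tr_M^\lambda(g \circ f)$ is Lemma \ref{trace-non-deg}. Together these establish both axioms of Definition \ref{def:calabi-yau-cat}, so the conclusion follows by simply citing the two lemmas.

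Since all the individual pieces have been proven in the preceding lemmas, I do not expect any genuine obstacle: the proposition is a summary that collects the categorical facts about semisimple algebras together with Lemmas \ref{lem:trace-symmetric} and \ref{trace-non-deg}. The only point requiring minor care is confirming that the four finiteness conditions of Definition \ref{def:finite-cat} all hold, which follows immediately from finite-dimensionality of the modules over the finite-dimensional algebra $A$.
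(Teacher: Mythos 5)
Your proposal is correct and follows essentially the same route as the paper: the paper likewise disposes of the categorical requirements by citing the standard fact that finite-dimensional modules over a finite-dimensional algebra form a finite linear category, and then invokes Lemmas \ref{lem:trace-symmetric} and \ref{trace-non-deg} for cyclicity and non-degeneracy of $\tr_M^\lambda$. The only difference is that you spell out the finiteness and semisimplicity checks explicitly rather than citing them, which is harmless.
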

\begin{proof}
  It is well-known that the category of finite-dimensional modules
  over a finite-dimensional algebra is a finite, linear category,
  cf. \cite{dss12}.
  
  If $M$ is a finitely-generated $A$-module, the trace
  $\tr^\lambda(M): \End(M) \to \K$ as defined in equation
  \eqref{eq:def-trace} is symmetric by lemma
  \ref{lem:trace-symmetric}, while the induced bilinear form is
  non-degenerate by lemma \ref{trace-non-deg}.  This shows that
  $(\Mod{A}{})^{\fg}$ is a Calabi-Yau category.
\end{proof}

The following example shows that assumption that $A$ is semisimple is
a necessary condition.
\begin{ex}[Counter-example]
Let $\K$ be a field of characteristic two, and consider the group
  algebra $A:=\K[\Z_2]$. Then, $A \cong \K[x]/( x^2-1) \cong
  \K[x]/(x-1)^2$. This is in fact a Frobenius algebra with Frobenius
  form $\lambda(g)=\delta_{g,e}$, which is not separable. Let $S$ be
  the trivial representation, and consider a projective
  two-dimensional representation of $A$ which we shall call $P$. Here,
  the non-trivial generator $g$ of $A$ acts on $P$ by the matrix
  \begin{equation}
    \qquad g =
    \begin{pmatrix}
      1                       & 1         \\
      0 & 1
    \end{pmatrix}.
  \end{equation}
  One easily computes that
  \begin{equation}
       \Hom(P,S)  \cong \left\{
      \begin{pmatrix}
        0 & b
      \end{pmatrix}
      \mid b \in \K \right\}, \qquad  ~\text{and}~             \qquad
    \Hom(S,P)  \cong \left\{
      \begin{pmatrix}
        a                                     \\
        0
      \end{pmatrix}
      \mid a \in \K \right\}.
  \end{equation}
  We claim that there is no trace on the representation category of
  $A$. Indeed, let $\tr_S : \End(S) \to \K$ be any linear map. Then,
  the pairing
  \begin{align}
    \Hom(S,P) \ot \Hom(P,S) & \to \K    \\
    \begin{pmatrix}
      a \\0
    \end{pmatrix}
    \ot
    \begin{pmatrix}
      0 & b
    \end{pmatrix}
    & \mapsto \tr_S\left(
      \begin{pmatrix}
        0 & b
      \end{pmatrix}
      \begin{pmatrix}
        a \\0
      \end{pmatrix}
    \right) =0
  \end{align}
  is always degenerate. Therefore, a non-degenerate pairing does not
  exist.
\end{ex}
\subsection{Constructing the 2-functor \texorpdfstring{$\Rep^{\fg}$}{Rep^{fg}} on 1-morphisms} 
The next step of the construction will be the value of $\Rep^{\fg}$ on
1-morphisms of $\Frob$, which are compatible Morita contexts. To
these, we will have to assign equivalences of Calabi-Yau categories. Let us
recall a classical theorem from Morita theory:
\begin{theorem}[{\cite[Theorem 3.4 and 3.5]{bass}}]
  Let $A$ and $B$ be $R$-algebras, and let $({_BM_A},{_AN_B},\eps,
  \eta)$ be a Morita context between $A$ and $B$. Then,
  \begin{enumerate}
  \item $M$ and $N$ are both finitely-generated and projective as
    $B$-modules.
  \item An $A$-module $X$ is finitely generated over $A$ if and only
    if $M \ot_A X$ is finitely generated over $B$.
  \item The functor
    \begin{align}
      M \ot_A - : \Mod{A}{} \to \Mod{B}{}
    \end{align}
    is an equivalence of linear categories.
  \end{enumerate}
\end{theorem}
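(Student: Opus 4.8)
The plan is to exhibit $N \ot_B -$ as a quasi-inverse of $M \ot_A -$, to read finite generation and projectivity off the triangle identities, and then to deduce the finiteness statement in (2) formally from the first two parts. First I would prove (3). Since $\eta \colon {_BB_B} \to {_BM \ot_A N_B}$ and $\eps \colon {_AN \ot_B M_A} \to {_AA_A}$ are isomorphisms of bimodules, they induce, for every left $B$-module $Y$ and every left $A$-module $X$, natural isomorphisms
\[
  M \ot_A N \ot_B Y \xleftarrow{\;\eta \ot \id_Y\;} B \ot_B Y \cong Y, \qquad N \ot_B M \ot_A X \xrightarrow{\;\eps \ot \id_X\;} A \ot_A X \cong X.
\]
These say precisely that the right-exact additive functors $M \ot_A - \colon \Mod{A}{} \to \Mod{B}{}$ and $N \ot_B - \colon \Mod{B}{} \to \Mod{A}{}$ are mutually quasi-inverse, so $M \ot_A -$ is an equivalence of linear categories, proving (3). (The commuting triangles \eqref{eq:def-morita-context-1} and \eqref{eq:def-morita-context-2}, which by lemma \ref{lem:morita-context-only-1-diagram} stand or fall together, are not needed for the bare equivalence but will enter in (1).)

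Next I would prove (1). Writing $\eps^{-1}(1_A) = \sum_j n_j \ot m_j \in N \ot_B M$, I define $\K$-linear maps $\phi_j \colon M \to B$ by $\phi_j(m) := \eta^{-1}(m \ot n_j)$; since $\eta^{-1}$ is a map of $(B,B)$-bimodules, each $\phi_j$ is left $B$-linear, hence $\phi_j \in \Hom_B(M,B)$. Inverting the composite that diagram \eqref{eq:def-morita-context-1} asserts to be the identity on $M$, namely
\[
  M \cong M \ot_A A \xrightarrow{\;\id_M \ot \eps^{-1}\;} M \ot_A N \ot_B M \xrightarrow{\;\eta^{-1} \ot \id_M\;} B \ot_B M \cong M,
\]
yields $m = \sum_j \phi_j(m)\, m_j$ for all $m \in M$. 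Thus $\{(\phi_j, m_j)\}$ is a dual basis, and $M$ is finitely generated and projective as a left $B$-module by the dual basis lemma \ref{lem:dual-basis}. The symmetric computation, with the roles of $M$ and $N$ and of $\eps$ and $\eta$ interchanged, shows that $N$ is finitely generated and projective as a right $B$-module. Alternatively, one may simply note that the equivalence of (3) preserves finitely generated projective objects and that $M \cong M \ot_A A$ is the image of the free module $A$.

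Finally, (2) follows formally from (1) and (3). If $X$ is finitely generated over $A$, choose a surjection $A^n \epi X$; applying the right-exact functor $M \ot_A -$ gives a surjection $M^n \cong M \ot_A A^n \epi M \ot_A X$, and since $M$ is finitely generated over $B$ by (1), so is $M^n$ and hence its quotient $M \ot_A X$. Conversely, if $M \ot_A X$ is finitely generated over $B$, I apply $N \ot_B -$ together with the isomorphism $N \ot_B M \ot_A X \cong X$ from (3); as $N$ is finitely generated over $A$, the same surjection argument shows that $X$ is finitely generated over $A$.

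The main obstacle is essentially bookkeeping rather than any genuine difficulty: verifying the dual-basis identity in (1) requires extracting the correct element-level relation from the commuting triangles and keeping careful track of the four distinct one-sided module structures carried by $M$ and $N$. Once the two natural isomorphisms of (3) are in hand, everything else is nearly formal.
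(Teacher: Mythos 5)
The paper offers no proof of this theorem: it is quoted verbatim from Bass and used as a black box, so there is nothing internal to compare against. Your argument is the standard one and is essentially correct. Part (3) is right, and your observation that the bare equivalence needs only that $\eps$ and $\eta$ are bimodule isomorphisms (not the triangle identities) is accurate. Part (1) is also right: inverting the composite from diagram \eqref{eq:def-morita-context-1} does give $m=\sum_j \phi_j(m).m_j$ with $\phi_j(m)=\eta^{-1}(m\ot n_j)\in B$ left $B$-linear, so lemma \ref{lem:dual-basis} applies to $_BM$; the symmetric computation with diagram \eqref{eq:def-morita-context-2} handles $N_B$.

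The one point you should tighten is in the converse direction of (2). There you invoke ``$N$ is finitely generated over $A$'', i.e.\ as a \emph{left} $A$-module, but part (1) as you proved it (and as stated) only gives $N$ finitely generated projective as a \emph{right} $B$-module. The fact you need is true, but it is the statement obtained by exchanging the roles of $A$ and $B$ in the Morita context, not the one you established: writing $\eta(1_B)=\sum_i x_i\ot y_i\in M\ot_A N$ and setting $\psi_i(n):=\eps(n\ot x_i)\in\Hom_A(N,A)$, diagram \eqref{eq:def-morita-context-2} gives $n=\sum_i\psi_i(n).y_i$, so $_AN$ is finitely generated projective by lemma \ref{lem:dual-basis}. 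With that sentence added (or, alternatively, by noting that finite generation is a categorical property of objects of $\Mod{A}{}$ and hence preserved by the equivalence of (3), which makes (2) immediate), the proof is complete.
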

This theorem suggests that we should define $\Rep^{\fg}$ on Morita
contexts by the functor which tensors with the bimodule $M$. In
order for this to be well-defined, this functor should be a Calabi-Yau
functor as in definition \ref{def:calabi-yau-functor} if the Morita
context is compatible with the Frobenius forms as in definition \ref{def:morita-compatible}. In order to
show this, we need an additional lemma:
\begin{lemma}
  \label{lem:morita-context-psi-tensor}
  Let $A$ and $B$ be two semisimple $\K$-algebras.  Let
  $\cM=(M,N,\eps,\eta)$ be a Morita context between $A$ and $B$. Write
  \begin{equation}
    \label{eq:eps-inverse}
    \eps^{-1}(1_A)=\sum_{i,j}n_i \ot m_j \in N \ot_B M.
  \end{equation}
For $T$ a finitely-generated left $A$-module, define a linear map
  \begin{equation}
    \begin{aligned}
      \xi:  T^* \ot_A T &\to (M \ot_A T)^* \ot_B (M \ot_A T) \\
      t^* \ot t & \mapsto \left( \left(x \ot y \mapsto \sum_i
          \eta^{-1} \left(x.t^*(y) \ot n_i \right) \right) \ot \sum_j
        m_j \ot t \right)
    \end{aligned}
  \end{equation}
  Then, the following diagram commutes.
  \begin{equation}
    \begin{tikzcd}[row sep=large]
      T^*\ot_AT \dar[swap]{\Psi_{T,T}} \rar{\xi} & (M \ot_A T)^* \ot_B
      (M \ot_A T) \dar{\Psi_{M \ot_A T, M \ot_A T}} \\ \End_A(T)
      \rar[swap]{\id_M \ot - } &\End_B(M \ot_A T)
    \end{tikzcd}
  \end{equation}
\end{lemma}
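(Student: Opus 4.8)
The plan is to verify the square directly on elements. Since $A$ and $B$ are semisimple and $T$ (hence $M\ot_A T$, by the Morita theorem quoted just above) is finitely generated, Corollary \ref{cor:psi-invertible-over-k} guarantees that $\Psi_{T,T}$ and $\Psi_{M\ot_A T,M\ot_A T}$ are isomorphisms; however, for the commutativity itself I only need the two composites to agree, so I would simply evaluate $\Psi_{M\ot_A T,M\ot_A T}\circ\xi$ and $(\id_M\ot-)\circ\Psi_{T,T}$ on a simple tensor $t^*\ot t\in T^*\ot_A T$ and compare the resulting endomorphisms of $M\ot_A T$. I would first record that $\xi$ is well defined, i.e.\ that the formula respects the balancing over $A$; this is the routine calculation using that $\eps^{-1}$ is $A$-bilinear, so that $\sum_{i,j}a.n_i\ot m_j=\sum_{i,j}n_i\ot m_j.a$.

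Going around the bottom, $\Psi_{T,T}(t^*\ot t)$ is the endomorphism $z\mapsto t^*(z).t$, and applying $\id_M\ot-$ yields the endomorphism of $M\ot_A T$ sending $x\ot y\mapsto x\ot t^*(y).t$. Going around the top, $\xi(t^*\ot t)=\phi\ot w$ with $\phi(x\ot y)=\sum_i\eta^{-1}(x.t^*(y)\ot n_i)\in B$ and $w=\sum_j m_j\ot t$, so $\Psi_{M\ot_A T,M\ot_A T}(\phi\ot w)$ sends $x\ot y$ to $\phi(x\ot y).w=\sum_{i,j}\bigl(\eta^{-1}(x.t^*(y)\ot n_i).m_j\bigr)\ot t$. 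Writing $x':=x.t^*(y)\in M$ and moving the $A$-action across the tensor in the bottom composite, the commutativity reduces to the single identity $\sum_{i,j}\eta^{-1}(x'\ot n_i).m_j\ot t=x'\ot t$ in $M\ot_A T$, which I would deduce from the stronger claim that the map $\beta\colon M\to M$, $x'\mapsto\sum_{i,j}\eta^{-1}(x'\ot n_i).m_j$, is the identity.

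The heart of the argument is thus to prove $\beta=\id_M$, and this is where I expect the only real subtlety. I would read $\beta$ as the composite $M\cong M\ot_A A\xrightarrow{\id_M\ot\eps^{-1}}M\ot_A N\ot_B M\xrightarrow{\eta^{-1}\ot\id_M}B\ot_B M\cong M$, using that $\eps^{-1}(1_A)=\sum_{i,j}n_i\ot m_j$. Because $\eps$ and $\eta$ are isomorphisms, tensoring with an identity preserves invertibility, so $\id_M\ot\eps\colon M\ot_A N\ot_B M\to M$ and $\eta\ot\id_M\colon M\to M\ot_A N\ot_B M$ are both isomorphisms. Diagram \eqref{eq:def-morita-context-1} (one of the triangle identities) asserts exactly that $(\id_M\ot\eps)\circ(\eta\ot\id_M)=\id_M$; since both factors are isomorphisms this forces $\id_M\ot\eps=(\eta\ot\id_M)^{-1}=\eta^{-1}\ot\id_M$, and inverting gives $\id_M\ot\eps^{-1}=\eta\ot\id_M$. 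Substituting into the description of $\beta$ yields $\beta=(\eta^{-1}\ot\id_M)\circ(\eta\ot\id_M)=\id_M$, as desired.

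The main obstacle is really bookkeeping: keeping the left $B$- and right $A$-module structures straight so that each $\eta^{-1}(\,\cdot\,\ot n_i)\in B$ acts correctly on $w=\sum_j m_j\ot t$, and confirming the well-definedness of $\xi$. Once $\beta=\id_M$ is in hand the two evaluations coincide and the square commutes; note that the commutativity itself needs only that the maps $\Psi$ are defined, whereas it is their eventual invertibility, supplied by Corollary \ref{cor:psi-invertible-over-k}, that makes the diagram useful in the sequel.
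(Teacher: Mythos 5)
Your proof is correct and takes essentially the same route as the paper: both arguments evaluate the two composites on a simple tensor $t^*\ot t$ applied to $x\ot y$ and reduce the commutativity of the square to the single identity $\sum_{i,j}\eta^{-1}(x\ot n_i).m_j=x$ for all $x\in M$. The only difference is that the paper asserts this identity directly as a consequence of the Morita-context axioms, whereas you derive it (correctly) from diagram \eqref{eq:def-morita-context-1} using the invertibility of $\eps$ and $\eta$, and you additionally note the well-definedness of $\xi$, which the paper leaves implicit.
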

\begin{proof}
  First note that
  \begin{equation}
    \label{eq:morita-context-condition}
    \sum_{i,j} \eta^{-1}(x \ot n_i).m_j =x 
  \end{equation}
  for every $x$ in $M$, since $\eps$ and $\eta$ are part of a Morita
  context.  Now, we calculate:

  \begin{align}
    \begin{aligned}
      ( \id_M\ot- \circ \Psi_{T,T})(t^* \ot t)(x \ot y)= (\id_M \ot
      -)(y \mapsto t^*(y).t)(x \ot y)= x \ot t^*(y).t
    \end{aligned}
  \end{align}
  On the other hand,
  \begin{equation}
    \begin{aligned}
      & ( \Psi_{M \ot_A T, M \ot_A T} \circ \xi) (t^* \ot t)(x \ot y)=
      \\ & = (\Psi_{M \ot_A T, M \ot_A T})\left( \left(x \ot y \mapsto
          \sum_i \eta^{-1} \left(x.t^*(y) \ot n_i \right) \right) \ot
        \sum_j m_j \ot t \right)(x \ot y) \\ &=
      \sum_{i,j }\eta^{-1}(x.t^*(y) \ot n_i) .(m_j \ot t) \\
      &= \sum_{i,j}\eta^{-1}(x.t^*(y) \ot n_i). m_j \ot t \\
      &=x.t^*(y) \ot t
    \end{aligned}
  \end{equation}
  where in the last line, we have used equation
  \eqref{eq:morita-context-condition}.  This shows that the diagram
  commutes.
\end{proof}
The next proposition shows how the compatibility condition on the
Morita context between two Frobenius algebras in definition \ref{def:morita-compatible} is equivalent
to the fact that tensoring with the bimodule $M$ of the Morita context
is a Calabi-Yau functor:
\begin{prop}
  \label{prop:compatible-morita-cy-functor}
  Let $(A,\lambda^A)$ and $(B, \lambda^B)$ be two semisimple symmetric
  Frobenius algebras, and let $(M,N,\eps, \eta)$ be a Morita context
  between $A$ and $B$. Endow $\Rep^{\fg}(A)$ and $\Rep^{\fg}(B)$ with
  the Calabi-Yau structure as in definition
  \ref{cy-structure-rep}. Then, the Morita context is compatible with
  the Frobenius forms $\lambda^A$ and $\lambda^B$ as in definition
  \ref{def:morita-compatible} if and only if
  \begin{equation}
    (M \ot_A -) : \Rep(A)^{\fg} \to \Rep(B)^{\fg}   
  \end{equation}
  is a Calabi-Yau functor as in definition
  \ref{def:calabi-yau-functor}.
\end{prop}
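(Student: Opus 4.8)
The plan is to unfold both definitions and let Lemma \ref{lem:morita-context-psi-tensor} carry the bookkeeping. By definition \ref{def:calabi-yau-functor}, the functor $M \ot_A -$ is a Calabi-Yau functor precisely when
\[
  \tr_T^{\lambda^A}(f) = \tr_{M \ot_A T}^{\lambda^B}(\id_M \ot f)
\]
for every finitely-generated left $A$-module $T$ and every $f \in \End_A(T)$. Using the definition \eqref{eq:def-trace} of the trace, the left-hand side equals $\lambda^A(\ev(\Psi_{T,T}^{-1}(f)))$ and the right-hand side equals $\lambda^B(\ev(\Psi_{M \ot_A T, M \ot_A T}^{-1}(\id_M \ot f)))$, so the whole statement reduces to comparing two evaluations.

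First I would rewrite the right-hand side. Lemma \ref{lem:morita-context-psi-tensor} yields $\Psi_{M \ot_A T, M \ot_A T} \circ \xi = (\id_M \ot -) \circ \Psi_{T,T}$, and hence $\Psi_{M \ot_A T, M \ot_A T}^{-1}(\id_M \ot f) = \xi(\Psi_{T,T}^{-1}(f))$. Writing $\Psi_{T,T}^{-1}(f) = \sum_k t_k^* \ot t_k$ and inserting the explicit formula for $\xi$ together with $\eps^{-1}(1_A) = \sum_{i,j} n_i \ot m_j$, a direct computation gives on simple tensors
\[
  \ev\!\left(\xi(t^* \ot t)\right) = \sum_{i,j} \eta^{-1}\!\left(m_j.\, t^*(t) \ot n_i\right),
\]
whereas on the $A$-side simply $\ev(t^* \ot t) = t^*(t)$. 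Extending linearly over $k$, the Calabi-Yau functor condition becomes equivalent to the identity
\[
  \lambda^A\!\left(t^*(t)\right) = \sum_{i,j}\lambda^B\!\left(\eta^{-1}\!\left(m_j.\, t^*(t) \ot n_i\right)\right)
\]
holding for all $T$ and all $t^* \ot t \in T^* \ot_A T$.

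It then remains to match this with the compatibility condition \eqref{eq:morita-compatible-form}. For the implication ``compatible $\To$ Calabi-Yau functor'' I would observe that \eqref{eq:morita-compatible-form} is exactly the displayed identity with $a := t^*(t)$; since it holds for every $a \in A$, it holds for each $t_k^*(t_k)$, and summing over $k$ recovers the trace equality. For the converse, the key move is to feed in the regular module: taking $T = A$, the element $\id_A$ lies in $A^* = \Hom_A(A,A)$, and if $f \in \End_A(A)$ is right multiplication by $a$ then $\Psi_{A,A}(\id_A \ot a) = f$, so $\ev(\Psi_{A,A}^{-1}(f)) = \id_A(a) = a$. As $f$ ranges over $\End_A(A)$ this value ranges over all of $A$, so the displayed identity for $T = A$ is precisely \eqref{eq:morita-compatible-form} for arbitrary $a \in A$.

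The genuine content is packaged into Lemma \ref{lem:morita-context-psi-tensor}; once that commuting square is available, the only real work is the explicit evaluation of $\ev \circ \xi$ and recognizing it as the right-hand side of \eqref{eq:morita-compatible-form}. The slightly non-obvious point, and the step I expect to be the main obstacle, is the converse direction: a priori the Calabi-Yau functor condition must hold for \emph{all} test modules $T$, and one must notice that the regular representation $T = A$ already detects the full compatibility condition, so that no further modules need to be considered.
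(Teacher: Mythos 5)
Your proof is correct and follows essentially the same route as the paper: both use Lemma \ref{lem:morita-context-psi-tensor} to reduce the Calabi-Yau condition to the identity $\lambda^A(t^*(t)) = \sum_{i,j}\lambda^B\bigl(\eta^{-1}(m_j.t^*(t)\ot n_i)\bigr)$ on simple tensors and then match it with the compatibility condition \eqref{eq:morita-compatible-form}. Your explicit use of the regular module $T=A$ to realize every $a\in A$ as $t^*(t)$ just spells out the converse direction, which the paper leaves implicit.
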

% \begin{lemma}
%   \note{I don't really need this lemma} Let $A$ and $B$ be
%   $R$-algebras. Let $Q$ be a $(B,A)$-bimodule which in finitely
%   generated and projective as a left $B$-module, and let $M$ be a
%   left $A$-module.  Then,
%   \begin{equation}
% \label{eq:tensor-dual}
%         \begin{aligned}
%           M^* \ot_A Q^* & \to (Q \ot_A M)^* \\
%           m^* \ot q^* &\mapsto( q \ot m \mapsto q^*(q. \, m^*(m)))
%         \end{aligned}
%       \end{equation}
%       is an isomorphism of left $B$-modules.
%     \end{lemma}

%     \begin{proof}
%       Since $Q$ is finitely generated and projective, the map
%       $\Psi_{M,Q^*}:M^* \ot_A Q^* \to \Hom_A(M, Q^*)$ is an
%       isomorphism by the fourth part of lemma \ref{lem:dual-basis}.

%       Using the tensor-hom adjunction then shows that
%       \begin{equation}
%   \begin{aligned}
%     M^* \ot_A Q^* &\cong \Hom_A(M, Q^*) \\ & = \Hom_A(M,
%     \Hom_B(Q,B)) \\ &\cong \Hom_B(Q \ot_A M,B) \\ &= (Q \ot_A M)^*.
%   \end{aligned}
% \end{equation}
% Composing $\Psi_{M, Q^*}$ with the isomorphism from the tensor-hom
% adjunction yields the map specified in equation
% \eqref{eq:tensor-dual}.
% \end{proof}
\begin{proof}
  Let $_AT$ be a finitely-generated left $A$-module.  By definition,
  the functor $M \ot_A-$ is a Calabi-Yau functor if and only if
  \begin{equation}
    \tr_{M \ot_A T}^{\lambda^B} (\id_M \ot f)= \tr_T^{\lambda^A}(f)
  \end{equation}
  for all $f \in \End_A(T)$. We have to calculate the left hand-side:
  Let $f \in \End_A (T)$ and write
  \begin{equation}
    \Psi^{-1}_{T,T}(f)=\sum_{i,j}t_i^* \ot t_j \in T^* \ot_A T .
  \end{equation}
  Using $n_i$ and $m_j$ as introduced in formula
  \eqref{eq:eps-inverse}, lemma
  \ref{lem:morita-context-psi-tensor} shows that
  \begin{equation}
    \begin{aligned}
      \Psi_{M \ot_A T, M \ot_A T}^{-1}( \id_M \ot f)&= \xi \circ \Psi^{-1}_{T,T}(f)\\ &=\sum_{i,j}\xi(t_i^* \ot t_j)\\
      &= \left( x \ot y \mapsto \sum_{k,i} \eta^{-1} \left(x.t_i^*(y)
          \ot n_k \right) \right) \ot \sum_{l,j} m_l \ot t_j .
    \end{aligned}
  \end{equation}
  Hence,
  \begin{equation}
    \label{eq:2}
    \begin{aligned}
      \tr_{M \ot_A T}^{\lambda^B}(\id_M \ot f)&=({\lambda^B} \circ \ev \circ \Psi_{M \ot_A T, M \ot_A T}^{-1})(\id_M \ot f) \\
      &=\sum_{i,j,k,l} {\lambda^B}(\eta^{-1}(m_l.t_i^*(t_j) \ot n_k)).
    \end{aligned}
  \end{equation}
  Since
  \begin{equation}
    \label{eq:3}
    \tr^{\lambda^A}_T(f)= \sum_{i,j} \lambda^A (t_i^*(t_j))  ,
  \end{equation}
  the functor $M \ot_A -$ is a Calabi-Yau functor if and only if the
  right hand sides of equations \eqref{eq:2} and \eqref{eq:3} agree for
  every $t_i \in T^*$ and $t_j \in T$. Using the fact that the
  Frobenius forms are symmetric, and thus factor through $A/[A,A]$,
  this is the case if and only if the Morita context is compatible
  with the Frobenius forms as in equation
  \eqref{eq:morita-compatible-form}.
\end{proof}

\begin{newdef}
  Proposition \ref{prop:compatible-morita-cy-functor} enables us to
  define the 2-functor $\Rep^{\fg}$ on 1-morphisms of the bicategory $\Frob$: we assign
  to a compatible Morita context $\cM:=(M,N, \eps, \eta)$ between two
  semisimple symmetric Frobenius algebras $A$ and $B$ the equivalence
  of Calabi-Yau categories $\Rep^{\fg}(\cM)$ given by
  \begin{equation}
    \Rep^{\fg}(\cM) :=( M \ot_A -) : \Rep^{\fg}(A) \to \Rep^{\fg}(B).
  \end{equation}
\end{newdef}

\subsection{Constructing the 2-functor \texorpdfstring{$\Rep^{\fg}$}{Rep^{fg}} on 2-morphisms}
Let $(M,N,\eps, \eta)$ and $(M', N',\eps', \eta')$ be two compatible Morita
contexts between semisimple symmetric Frobenius algebras $A$ and $B$, and let $\alpha: M \to M'$
and $\beta: N \to N'$ be a morphism of Morita contexts. We define a
natural transformation $\Rep^{\fg}((\alpha, \beta)):(M \ot_A -) \to( M' \ot_A -)$ as follows: for
every left $A$-module $_AX$, we define the component of the natural
transformation as
\begin{equation}
 \Rep^{\fg}((\alpha, \beta))_X := (\alpha \ot \id_X) : M \ot_A X \to M' \ot_A X.
\end{equation}
This is indeed a natural transformation because for every morphism $f:
{_AX} \to{ _AY}$ of left $A$-modules, the following diagram
\begin{equation}
  \begin{tikzcd}[column sep=large, row sep=large]
    M \ot_A X \rar{\alpha \ot \id_X} \dar[swap]{\id_M \ot f} & M'
    \ot_A X
    \dar{\id_{M'} \ot f} \\
    M \ot_A Y \rar[swap]{\alpha \ot \id_Y} & M' \ot_A Y
  \end{tikzcd}
\end{equation}
commutes.

Thus, we have obtained the following weak 2-functor $\Rep^{\fg}$:
\begin{equation}
  \begin{aligned}
    \Rep^{\fg}: \Frob  & \to \CY \\
    (A, \lambda^A) &\mapsto \left(\Rep^{\fg}(A), \tr^{\lambda^A} \right) \\
    ({_BM_A}, {_BN_A}, \eps, \eta) & \mapsto \left(M \ot_A - :
      (\Rep^{\fg}(A),
      \tr^{\lambda^A}) \to (\Rep^{\fg}(B),  \tr^{\lambda^B})  \right) \\
    \left( (\alpha, \beta): ({_BM_A}, {_BN_A}, \eps, \eta) \to
      ({_BM'_A}, {_BN'_A}, \eps', \eta') \right) & \mapsto \left(
      \alpha \ot \id_{ -} :( M \ot_A -) \to( M' \ot_A -) \right)
  \end{aligned}
\end{equation}
Observe that by the definition of the Deligne tensor product, this
weak 2-functor is compatible with the symmetric monoidal structures of
$\Frob$ and $\CY$, and thus is a symmetric monoidal 2-functor.
\section{Proving the equivalence}
\label{sec:proving-equivalence}
The aim of this section is to prove that the weak 2-functor
$\Rep^{\fg}: \Frob \to \CY$ constructed in section
\ref{sec:rep-construction} is an equivalence of bicategories. This
will be done in several steps. First, we show that $\Rep^{\fg}$ is
essentially surjective. Let $(\cC , \tr^\cC)$ be a Calabi-Yau
category, and let $X_1, \ldots, X_n$ be representatives of the
isomorphism classes of simple objects of $\cC$. Define an object
$P$ of $\cC$ as $P:=\oplus_{i=1}^n X_i$. Clearly, $A:= \End_\cC(P)$ is
a semisimple, symmetric Frobenius algebra over $\K$ with Frobenius
form $\lambda$ given by $\lambda:=\tr^\cC_P$. By proposition
\ref{prop:a-mod-is-cy}, the category $(\Mod{A}{})^{\fg}$ has the
structure of a Calabi-Yau category. We now claim:
\begin{prop}
  \label{prop:calabi-yau-equivalent-a-mod}
  The functor
  \begin{equation}
    \Hom_\cC(P, -):\cC \to (\Mod{A}{})^{\fg}
  \end{equation}
  is an equivalence of Calabi-Yau categories.
\end{prop}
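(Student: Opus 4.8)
The plan is to establish that $\Hom_\cC(P,-)$ is an equivalence of Calabi-Yau categories by verifying three things: that it is a linear equivalence of categories, and that it preserves the Calabi-Yau trace. The underlying equivalence is the classical Morita-theoretic statement, since $P$ is a projective generator of the semisimple category $\cC$ (it contains each simple object exactly once), so $\Hom_\cC(P,-)$ is well-known to be an equivalence onto $(\Mod{A}{})^{\fg}$ with $A=\End_\cC(P)$. The real content is to check the Calabi-Yau condition from definition \ref{def:calabi-yau-functor}, namely that
\begin{equation}
  \tr_c^\cC(\varphi) = \tr_{\Hom_\cC(P,c)}^{\lambda}(\Hom_\cC(P,\varphi))
\end{equation}
for every object $c$ of $\cC$ and every $\varphi \in \End_\cC(c)$, where $\lambda = \tr_P^\cC$.

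**First I would** reduce to the case where $c$ is a simple object, or a sum of simples, using the additivity of both traces: the left-hand side is additive by lemma \ref{lem:cy-additive} and behaves well under the matrix decomposition of lemma \ref{lem:trace-matrix-form}, while the right-hand side $\tr^\lambda$ is additive by the same lemmas applied in $(\Mod{A}{})^{\fg}$, which is Calabi-Yau by proposition \ref{prop:a-mod-is-cy}. By the symmetry property of the trace (lemma \ref{lem:trace-symmetric} on the module side, and the cyclicity axiom in $\cC$), it then suffices to compare the two traces on identity endomorphisms of the simple objects $X_i$. Concretely, writing $M_i := \Hom_\cC(P,X_i)$, I would compute $\tr^\lambda_{M_i}(\id_{M_i})$ directly from definition \ref{cy-structure-rep}: unwind $\Psi_{M_i,M_i}^{-1}(\id_{M_i})$ into a dual basis, apply the evaluation into $A = \End_\cC(P)$, and then apply $\lambda = \tr_P^\cC$.

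**The key computation** is to see that the image of $\id_{M_i}$ under $\ev \circ \Psi_{M_i,M_i}^{-1}$ lands on the idempotent $e_i \in \End_\cC(P)$ projecting $P$ onto the summand $X_i$, so that $\lambda(e_i) = \tr_P^\cC(e_i) = \tr_{X_i}^\cC(\id_{X_i})$ by the cyclicity and additivity of $\tr^\cC$. This is exactly the abstract analogue of the matrix-algebra calculation in example \ref{ex:rank-of-k^n}, where tensoring the projective simple module against a dual basis recovers a single rank-one idempotent. Matching this against $\tr_{X_i}^\cC(\id_{X_i})$ on the left closes the simple case, and additivity propagates it to all objects.

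**The hard part will be** making the identification $\ev \circ \Psi_{M_i,M_i}^{-1}(\id_{M_i}) = e_i$ precise, i.e.\ tracking the dual-basis elements $f_k \in M_i^* = \Hom_A(\Hom_\cC(P,X_i), A)$ through the evaluation map into $\End_\cC(P)$ and recognizing the result as the idempotent $e_i$. This requires carefully unpacking how the right $A$-action on $M_i^*$ and the identification $A = \End_\cC(P)$ interact under the Yoneda-type isomorphisms, and it is where one must invoke that $P$ is a generator so that the dual basis genuinely reconstructs $e_i$ rather than some other element. Once this bookkeeping is done the remaining steps are formal consequences of the additivity and cyclicity lemmas already established, so I expect no further obstruction beyond this identification.
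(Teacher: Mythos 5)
Your proposal is correct, and it reaches the same conclusion by a genuinely different decomposition of the work. The paper does not reduce to simple objects: it takes an arbitrary $X=\oplus_{j=1}^m X_j$ and an arbitrary $f\in\End_\cC(X)$ in matrix form, and explicitly exhibits $\Psi^{-1}_{\Hom_\cC(P,X),\Hom_\cC(P,X)}(\Hom_\cC(P,f))$ as a single pure tensor $\delta^*\ot\tilde f$ (with $\delta^*$ and $\tilde f$ written down as block matrices), after which both traces are evaluated by lemma \ref{lem:trace-matrix-form} and visibly agree. You instead use additivity (lemmas \ref{lem:cy-additive} and \ref{lem:trace-matrix-form} on both sides) to reduce to simple objects, then reduce to $\id_{X_i}$, and finally identify $\ev\circ\Psi^{-1}_{M_i,M_i}(\id_{M_i})$ with the idempotent $e_i=\iota_i\circ p_i$ so that $\lambda(e_i)=\tr^\cC_P(\iota_i\circ p_i)=\tr^\cC_{X_i}(\id_{X_i})$ by cyclicity; this is the standard fact that the Hattori--Stallings trace of $\id_{Ae}$ is $[e]$, and your analogy with example \ref{ex:rank-of-k^n} is exactly right. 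Your route is more modular and conceptually transparent but leans on Schur's lemma over the algebraically closed field (note that the reduction from a general $g\in\End_\cC(X_i)$ to $\id_{X_i}$ is by Schur plus linearity, not by the symmetry of the trace as you state --- a harmless misattribution); the paper's route is a single self-contained matrix computation that handles all $f$ at once without the reduction. The remaining bookkeeping you flag as ``the hard part'' (checking that the dual basis of $M_i\cong Ae_i$ evaluates to $e_i$, compatibly with the left $A$-action by precomposition) does go through, so there is no gap.
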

\begin{proof}
  An exercise of \cite{eghlsvy11} which is proven in \cite[Proposition
  1.4]{dss12} asserts that the functor $\Hom_\cC(P,-)$ is an
  equivalence of linear categories. Thus, our claim amounts to showing
  that this functor is compatible with the traces as required in
  definition \ref{def:calabi-yau-functor}.

  Write an object $X$ of $\cC$ as an arbitrary sum of simple objects,
  so that $X= \oplus _{j =1}^m X_j$, and let $f
  \in \End_\cC(X)$. Since $\cC$ is an additive category, we can represent $f$
  as an $m \times m$ matrix
  \begin{equation}
    f=
    \begin{pmatrix}
      f_{1, 1} & \ldots & f_{1, m} \\
      \vdots & & \vdots \\
      f_{m, 1} & \ldots & f_{m, m}
    \end{pmatrix}
  \end{equation}
  where $f_{k, l} \in \Hom_\cC(X_{l}, X_{k})$.

  Similarly, any $g \in \Hom_\cC(P,X) $ is naturally a $m \times n$
  matrix with entries
  \begin{equation}
    g=
    \begin{pmatrix}
      g_{1, 1} & \ldots & g_{1, m} \\
      \vdots & & \vdots \\
      g_{n, 1} & \ldots &g_{n, m}
    \end{pmatrix}
  \end{equation}
  where $g_{i, k} \in \Hom_\cC(X_{k}, X_i)$

  Under this identification, $A=\End_\cC(P)$ acts on $\Hom_\cC(P,X)$
  as $a.f:=f \cdot a$ where $f \cdot a$ is the matrix product of $f$
  and $a$.

  Then the morphism $\Hom_\cC(P,f)$ is given by
  \begin{equation}
    \begin{aligned}
      \Hom_\cC(P,f) :  \Hom_\cC(P,X) &\to \Hom_\cC(P,X) \\
      g &\mapsto f \cdot g
    \end{aligned}
  \end{equation}
  where $f \cdot g$ is the matrix product of $f$ and $g$. % Explicitly,
  % \begin{equation}
  %   (\Hom_\cC(P,f)(g))_{k,l}= \sum_{r=1}^m f_{k, r} \circ g_{r, l}.
  % \end{equation}
  As a first step to calculate the trace in $(\Mod{A}{})^{\fg}$, we
  claim that
  \begin{equation}
    \label{eq:psi-delta-tilde}
    \Psi^{-1}_{\Hom_\cC(P,X), \Hom_\cC(P,X)}( \Hom_\cC(P,f))= \delta^* \ot  \tilde{f},
  \end{equation}
  as an element of $ \Hom_\cC(P,X)^* \ot_A \Hom_\cC(P,X)$, where
  $\delta^* \in \Hom_\cC(P,X)^*$ and $ \tilde f \in \Hom_\cC(P,X)$ are
  defined as follows.  First, define the $m \times n$-matrix
  \begin{equation}
    \tilde f_{k, r} :=
    \begin{cases}
      f_{k, r} & \text{if $r \leq m$, } \\
      0 & \text{else}
    \end{cases}
  \end{equation}

  Now, given a $m \times n$ matrix $g\in \Hom_\cC(P,X)$, the element $
  \delta^*(g) $ of $A$ is defined to be an $n \times n$ matrix with
  entries
  \begin{equation}
    \delta^*(g)_{r,l} := 
    \begin{cases}
      g_{r, l} & \text{if $r\leq m$,} \\
      0 & \text{else}
    \end{cases}
  \end{equation}

  Then, if $g \in \Hom_\cC(P,X)$,
  \begin{equation}
    \begin{aligned}
      (\Psi((\delta)^* \ot \tilde f)(g))_{k,l} &=
      (\delta^*(g). \tilde f)_{k,l} \\
      &=(\tilde f \cdot \delta^*(g))_{k,l} \\
      &= \sum_{r=1}^n \tilde f_{k,r} \cdot \delta^*(g)_{r,l} \\
      &=  \sum_{r=1}^m f_{k, r}  \circ g_{r,l} \\
      & =(\Hom_\cC(P,f)(g))_{k,l}
    \end{aligned}
  \end{equation}
  This shows equation \eqref{eq:psi-delta-tilde}.

  We may now calculate the trace of the morphism $\Hom_\cC(P,f)$ in
  $(\Mod{A}{})^{\fg}$. By definition of the trace in $(\Mod{A}{})^{\fg}$,
  we have
  \begin{align}
    \tr^\lambda_{\Hom_\cC(P,X)}( \Hom_\cC(P,f)) = ( \tr_P \circ \ev
    \circ \Psi^{-1}_{\Hom_\cC(P,X), \Hom_\cC(P,X)})( \Hom_\cC(P,f)).
    \intertext{Then,}
    \begin{aligned}
      \tr^\lambda_{\Hom_\cC(P,X)}( \Hom_\cC(P,f)) &=
      (\tr_P \circ \ev)(\delta^* \ot \tilde f) \qquad  &&  \text{(by equation \eqref{eq:psi-delta-tilde})} \\
      &=\tr_P(\delta^*( \tilde f)) \\
      &= \sum_{i=1}^n \delta^*(\tilde f)_{ii} && \text{(by lemma \ref{lem:trace-matrix-form})} \\
      &=\sum_{i=1}^m f_{i, i } \\
      &= \tr_X^\cC(f). && \text{(by lemma
        \ref{lem:trace-matrix-form})}
    \end{aligned}
  \end{align}
  This shows that $\Hom_\cC(P,-)$ is a Calabi-Yau functor.
\end{proof}

Next, we follow the exposition in \cite[Proposition 3.1]{bass} and show
that the functor $\Rep^{\fg}$ is essentially surjective on
1-morphisms. In detail:
\begin{prop}
  \label{prop:rep-ess-surj-1-mor}
  Let $(A,\lambda^A)$ and $(B,{\lambda^B})$ be two semisimple,
  symmetric Frobenius algebras. Endow $(\Mod{A}{A})^{\fg}$ and
  $(\Mod{B}{})^{\fg}$ with the Calabi-Yau structure described in
  proposition \ref{prop:a-mod-is-cy}, and let
  \begin{equation}
    F:(\Mod{A}{})^{\fg} \rightleftarrows (\Mod{B}{})^{\fg} :G
  \end{equation}
  be an equivalence of Calabi-Yau categories.

  Then, there is a compatible Morita context $\cM$ between $A$ and
  $B$, so that $\Rep^{\fg}(\cM) \cong F$.
\end{prop}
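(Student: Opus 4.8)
The plan is to build the Morita context directly from the equivalence $F$, reversing the construction that sent a compatible Morita context to the functor $M \ot_A -$. First I would produce the bimodule: since $F: (\Mod{A}{})^{\fg} \to (\Mod{B}{})^{\fg}$ is a linear equivalence, I set $M := F(A)$, where $A$ is regarded as the left regular $A$-module. The right $A$-action on $A$ commutes with the left action, so it is carried by $F$ to a right $A$-action on $M$ by $B$-module endomorphisms; together with its inherent left $B$-module structure this makes $M$ a $(B,A)$-bimodule. Symmetrically, from the quasi-inverse $G$ I set $N := G(B)$, which is an $(A,B)$-bimodule. The counit and unit of the adjoint equivalence $F \dashv G$, evaluated on the regular modules and transported through the canonical isomorphisms $M \ot_A - \cong F$ and $N \ot_B - \cong G$, supply the bimodule isomorphisms $\eps: N \ot_B M \to A$ and $\eta: B \to M \ot_A N$. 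Standard Morita theory (the material underlying \cite[Proposition 3.1]{bass}, which the proposition explicitly follows) guarantees that these satisfy the triangle identities of Definition \ref{def:morita-context}, so that $\cM := (M,N,\eps,\eta)$ is a Morita context.

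Next I would verify that $\Rep^{\fg}(\cM) \cong F$ as linear functors. By construction $\Rep^{\fg}(\cM) = M \ot_A - = F(A) \ot_A -$, and there is a natural isomorphism $F(A) \ot_A X \cong F(X)$ for every finitely generated $A$-module $X$: this holds tautologically on $X = A$, extends to finite free modules by additivity, and hence to all finitely generated modules since every such module is a retract of a free one (using semisimplicity and that $F$ is additive and right-exact). This is routine and I would not dwell on it.

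The substantive step is compatibility of $\cM$ with the Frobenius forms. Here I would invoke Proposition \ref{prop:compatible-morita-cy-functor}, which states that $\cM$ is compatible with $\lambda^A$ and $\lambda^B$ \emph{if and only if} the functor $M \ot_A -$ is a Calabi-Yau functor. But $M \ot_A - = \Rep^{\fg}(\cM) \cong F$, and $F$ is a Calabi-Yau functor by hypothesis; since the Calabi-Yau condition (equality of traces, $\tr^\cC_c(f) = \tr^\cD_{F(c)}(F(f))$) is invariant under natural isomorphism of functors, $M \ot_A -$ is a Calabi-Yau functor as well. Proposition \ref{prop:compatible-morita-cy-functor} then yields compatibility, completing the proof.

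The main obstacle I anticipate is not the Frobenius compatibility—that is handed to us cleanly by Proposition \ref{prop:compatible-morita-cy-functor}—but the careful bookkeeping of bimodule structures and the triangle identities: one must check that $\eps$ and $\eta$ as extracted from the unit and counit are genuinely \emph{bimodule} maps (not merely one-sided) and that they are mutually inverse in the sense required by diagrams \eqref{eq:def-morita-context-1} and \eqref{eq:def-morita-context-2}. By Lemma \ref{lem:morita-context-only-1-diagram} it suffices to check a single triangle, which reduces the labor. I would rely on \cite[Proposition 3.1]{bass} for the purely Morita-theoretic content so as to concentrate the exposition on identifying the correct bimodule actions and on the transport of the Calabi-Yau property through the equivalence.
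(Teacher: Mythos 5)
Your proposal is correct and follows essentially the same route as the paper: set $M := F(A)$ and $N := G(B)$ with the transported bimodule structures, identify $F \cong M \ot_A -$ in Eilenberg--Watts fashion, and then use Proposition \ref{prop:compatible-morita-cy-functor} to convert the Calabi--Yau property of $F$ (which is invariant under natural isomorphism) into compatibility of the Morita context with the Frobenius forms. The one step you delegate to \enquote{standard Morita theory} --- arranging that $\eps$ and $\eta$ genuinely satisfy diagram \eqref{eq:def-morita-context-1} --- is exactly the step the paper carries out by hand, observing that the failure of the triangle identity is measured by a central unit $u \in Z(B)$ and replacing $\eta$ by the corrected isomorphism $\tilde\eta^{-1}(m \ot n) = u.\eta^{-1}(n \ot m)$; your alternative of starting from an adjoint equivalence (whose zig-zag identities give the Morita coherence directly) accomplishes the same thing, so this is a presentational rather than a substantive difference.
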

\begin{proof}
The proof works essentially by an application of Eilenberg-Watts and
by checking that everything is compatible with the traces:
  define a $(B,A)$-bimodule $M$ as $M:=F(A)$ which is naturally a left
  $B$-module, and a right $A$-module by using the map
  \begin{equation}
    A \cong \End_A(A) \xrightarrow{F} \End_B(M).
  \end{equation}
  The Eilenberg-Watts theorem then shows that the functor $F$ is
  naturally isomorphic to $M \ot_A -$ (cf. theorem 1 in
  \cite{watts-theorem}). Thus $F$ is a Calabi-Yau functor if and only
  if $M \ot_A - $ is a Calabi-Yau functor.  Similarly, there is an
  $(A,B)$-bimodule $N$ given by $N:=G(B)$, so that the functor $G$ is
  naturally isomorphic to $N \ot_B-$.

  Furthermore, there are isomorphisms of bimodules
  \begin{equation}
    \begin{aligned}
      \eps: N \ot_B M \cong G(M) \cong G(M \ot_A A ) \cong G(F(A)) \cong A \\
      \eta: B \cong F(G(B)) \cong F(N \ot_B B ) \cong F(N) \cong M
      \ot_A N
    \end{aligned}
  \end{equation}
  since $F$ and $G$ is an equivalence of categories.  We claim that we
  can choose these isomorphisms in such a way that $(M,N,\eps,\eta)$
  becomes a Morita context.
 
  Indeed, by lemma \ref{lem:morita-context-only-1-diagram} it suffices
  to show that diagram \eqref{eq:def-morita-context-2} commutes. Let
  $r_B : N \ot_B B \to N$ be right-multiplication, and let $l_A: A
  \ot_B N \to N$ be left-multiplication.

  Since $\eps$, $\eta$, $r_B$ and $l_A$ are isomorphisms of bimodules,
  there is a $u \in \Aut_{(A,B)}(N)$, so that
  \begin{equation}
    r_B \circ \id_N \ot \eta^{-1} = u \circ l_A \circ \eps \ot \id_N.
  \end{equation}
  In particular,
  \begin{equation}
    u \in \Hom_A(N,N) \cong \Hom_A(G(B), G(B)) \cong \Hom_B(B,B). 
  \end{equation}
  Since every morphism of left $B$-modules $u \in \Hom_B(B,B)$ is
  given by right multiplication with an element of $B$, we may
  identify $u$ with this element.  Since $u$ is also a morphism of
  right $B$-modules, the element $u$ is in the center of $B$.

  Now define an isomorphism of $(B,B)$-bimodules
  \begin{equation}
    \begin{aligned}
      \tilde \eta^{-1} : M \ot_A N &\to  B \\
      m \ot n &\mapsto u.\eta^{-1}(n \ot m).
    \end{aligned}
  \end{equation}
  Now, if we replace $\eta$ by $\tilde \eta$ we have made diagram
  \eqref{eq:def-morita-context-1} commute. Thus, $(M,N, \eps, \tilde
  \eta )$ is a Morita context, which is compatible with the Frobenius
  forms by proposition \ref{prop:compatible-morita-cy-functor}.
\end{proof}
We are now ready to prove the following theorem:
\begin{theorem}
  \label{thm:cy-frob-equivalence}
  The weak 2-functor $   \Rep^{\fg}: \Frob \to \CY $ is an equivalence
  of bigroupoids.
 \end{theorem}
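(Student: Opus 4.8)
The plan is to invoke the standard recognition criterion for biequivalences: a weak 2-functor between bigroupoids is an equivalence precisely when it is essentially surjective on objects and, for every pair of objects, the induced functor on hom-groupoids is an equivalence of categories (that is, essentially surjective, full, and faithful on 2-morphisms). Two of these three ingredients have already been established in the preceding propositions, so the bulk of the remaining work concerns the behaviour of $\Rep^{\fg}$ on 2-morphisms.

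First, essential surjectivity on objects is exactly Proposition \ref{prop:calabi-yau-equivalent-a-mod}: given a Calabi-Yau category $\cC$, setting $P := \bigoplus_i X_i$ (a sum of representatives of the simple objects), $A := \End_\cC(P)$, and $\lambda := \tr^\cC_P$, the functor $\Hom_\cC(P,-)$ exhibits $\cC$ as equivalent, as a Calabi-Yau category, to $\Rep^{\fg}(A)$. Second, for fixed semisimple symmetric Frobenius algebras $A$ and $B$, Proposition \ref{prop:rep-ess-surj-1-mor} shows that every Calabi-Yau equivalence $\Rep^{\fg}(A) \to \Rep^{\fg}(B)$ is isomorphic to $\Rep^{\fg}(\cM)$ for some compatible Morita context $\cM$; this is precisely essential surjectivity of the local functor on 1-morphisms.

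It remains to prove that, for compatible Morita contexts $\cM = (M,N,\eps,\eta)$ and $\cM' = (M',N',\eps',\eta')$ between $A$ and $B$, the assignment $(\alpha,\beta) \mapsto (\alpha \ot \id_{-})$ is a bijection from isomorphisms of Morita contexts $\cM \to \cM'$ onto natural isomorphisms $M \ot_A - \To M' \ot_A -$. For \emph{fullness}, given a natural isomorphism $\phi$ I would set $\alpha := \phi_A$ under the identification $M \ot_A A \cong M$; left $B$-linearity is automatic, and right $A$-linearity follows from naturality of $\phi$ along the left $A$-module endomorphisms $a \mapsto a' a$ of $A$. A short naturality argument along the maps $\hat{x} \colon A \to X$, $1 \mapsto x$, then shows $\phi_X(m \ot x) = \alpha(m) \ot x$ on all generators $m \ot x$, so $\phi = \alpha \ot \id_{-}$. \emph{Faithfulness} runs in reverse: evaluating $\alpha \ot \id_{-}$ at $A$ recovers $\alpha$, and the second component is then pinned down as explained below.

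The main obstacle is to produce the component $\beta \colon N \to N'$ from $\alpha$ and to verify that $(\alpha,\beta)$ satisfies the two compatibility diagrams of definition \ref{def:morphism-of-morita-context}. Here I would use that a Morita context is an adjoint 1-equivalence in $\Alg_2$, so $N$ and $N'$ are the duals of $M$ and $M'$; the map $\beta$ is then forced to be the mate of $\alpha$ determined by $\eps,\eta,\eps',\eta'$, which simultaneously yields uniqueness of $\beta$ (hence faithfulness) and, because $\phi$ is invertible, guarantees that $\alpha$ and its mate $\beta$ are isomorphisms, so that $(\alpha,\beta)$ is a genuine 2-morphism in the bigroupoid $\Frob$. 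Combining essential surjectivity on objects with the local equivalence just established, the recognition criterion gives that $\Rep^{\fg}$ is an equivalence of bigroupoids.
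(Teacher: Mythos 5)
Your proposal is correct and follows the same route as the paper: essential surjectivity on objects via Proposition \ref{prop:calabi-yau-equivalent-a-mod}, local essential surjectivity on 1-morphisms via Proposition \ref{prop:rep-ess-surj-1-mor}, and a bijection on 2-morphisms, which the paper dismisses with ``one shows by hand'' and which you actually spell out (correctly, via $\alpha := \phi_A$ and the mate construction for $\beta$). The only nitpick is that the $A$-module endomorphisms of $A$ you should invoke for right $A$-linearity are the right multiplications $a \mapsto a a'$, not the left multiplications.
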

\begin{proof}
  In a first step, proposition \ref{prop:a-mod-is-cy} shows the
  representation category of a semisimple symmetric Frobenius algebra
  has indeed got the structure of a Calabi-Yau
  category. Furthermore, this assignment is essentially surjective on
  objects: given a Calabi-Yau category $\cC$,
  proposition \ref{prop:calabi-yau-equivalent-a-mod} shows how to
  construct a Frobenius algebra $A$ so that $\Rep^{\fg}(A)$ and $\cC$
  are equivalent as Calabi-Yau categories.

  Now, given a compatible Morita context between two semisimple
  symmetric Frobenius
  algebras $A$ and $B$, proposition
  \ref{prop:compatible-morita-cy-functor} shows how to construct a
  Calabi-Yau functor between the representation
  categories. Furthermore, this assignment is essentially surjective
  by proposition \ref{prop:rep-ess-surj-1-mor}.

  Finally, one shows by hand that $\Rep^{\fg}$ induces a bijection on
  2-morphisms of $\Frob$ and $\CY$ which carry no
  additional structures or properties.
\end{proof}
\begin{remark}
  Note that both $\Frob$ and $\CY$ have an additional symmetric
  monoidal structure, which is preserved by the 2-functor
  $\Rep^\fg$. Essentially, this follows from the definition of the
  Deligne tensor product, and by observing that definition
  \ref{cy-structure-rep} of the Calabi-Yau structure on the
  representation category of a semisimple symmetric Frobenius algebra
  is well-behaved under taking tensor products. Thus, $\Rep^\fg: \Frob \to
  \CY$ is even an equivalence of symmetric monoidal bigroupoids.
\end{remark}

\begin{remark}
  Let us also comment on the relationship between the weak 2-functor
  $\Rep^\fg:\Frob \to \CY$ in theorem \ref{thm:cy-frob-equivalence}
  and the homotopy fixed points of a $G$-action on a
  bicategory as considered in \cite{hsv16}: by \cite[Corollary 4.2 and 4.5]{hsv16}
  there are equivalences of bigroupoids
\begin{equation}
  \begin{aligned}
    \core{\Alg_2^\fd}^{SO(2)} & \cong \Frob \\
\core{\Vect_2^\fd}^{SO(2)} & \cong \CY
  \end{aligned}
\end{equation}
where $\core{\Alg_2^\fd}^{SO(2)}$ and $\core{\Vect_2^\fd}^{SO(2)}$ are
the bigroupoids of homotopy fixed points of the trivial $SO(2)$-action
on the core of fully-dualizable objects of $\Alg_2$ and
$\Vect_2$. Now, as the $SO(2)$-action is trivial, the weak 2-functor
$\Rep:\core{\Alg_2^\fd} \to \core{\Vect_2^\fd}$ sending a semisimple
algebra to its category of representations induces a weak 2-functor on
homotopy fixed points $\Rep^{SO(2)}:\core{\Alg_2^\fd}^{SO(2)} \to
\core{\Vect_2^\fd}^{SO(2)}$. One now checks that this induced
2-functor on homotopy fixed points is pseudo-naturally isomorphic to
the weak 2-functor $\Rep^\fg$ of theorem
\ref{thm:cy-frob-equivalence}. More precisely, the following diagram
commutes up to pseudo-natural isomorphism:
\begin{equation}
 \vcenter{\hbox{\includegraphics{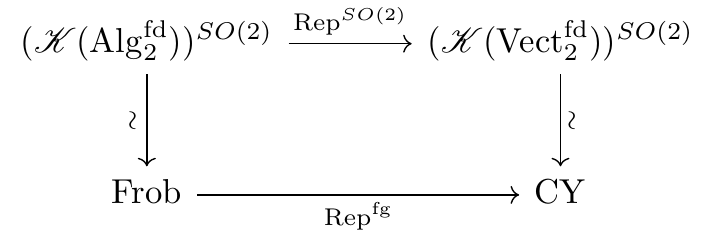}}}
\end{equation}
The commutativity of the diagram boils down to a Schur's lemma type of
argument with a vector of invertible scalars after following the
construction in \cite{hsv16}. Details of the construction are given in the
appendix and in \cite{hessephd}.
\end{remark}
\appendix
\section{Induced functors on homotopy fixed points}
\label{sec:induced-functors}
So far, we have constructed an equivalence of bicategories $\Rep^{\fg}:
\Frob \to \CY$. In this section, we show that this equivalence is
actually the \enquote{equivariantization} of the 2-functor sending an
algebra to its category of modules. 

In order to do so, we introduce the notion of a the
\enquote{equivariantization} of a weak
2-functor between bicategories equipped with a $G$-action, where $G$
is a topological group. Let us briefly recall the relevant
definitions: For a group $G$, we denote with $\B G$ the category with
one object and $G$ as morphisms. Similarly, if $\cC$ is a monoidal
(bi-)category, $\B\cC$ will denote the (tri-)bicategory with one object and $\cC$
as endomorphism (bi-)category of this object.  

For a topological group $G$, let $\Pi_2(G)$ be its fundamental
2-groupoid, and $\B \Pi_2(G)$ the tricategory with one object called
$*$ and $\Pi_2(G)$ as endomorphism bicategory. A $G$-action on a
bicategory $\cC$ is then defined to be a trifunctor $\rho:\B \Pi_2(G)
\to \Bicat$ with $\rho(*)=\cC$, where $\Bicat$ is the tricategory of
bicategories. Furthermore, given a $G$-action $\rho$ on a bicategory,
we define the bicategory of homotopy fixed points $\cC^G$ to be the
bicategory $\Nat(\Delta, \rho)$ where objects are given by tritransformations
between the constant functor $\Delta$ and $\rho$, 1-morphisms are
modifications, and 2-morphisms are 
perturbations. In the following, we will use the notation of
\cite{hsv16} concerning homotopy fixed points.
\begin{newdef}
  \label{def:g-equivariant-structure}
Let $\rho :\B\Pi_2(G) \to \Bicat$ be a $G$-action on the bicategory
  $\rho(*)=\cC$, and let $\rho':\B\Pi_2(G) \to \Bicat$ be a
  $G$-action on $\rho'(*)=\cD$. Let $H: \cC \to \cD$ be a weak
  2-functor. A $G$-equivariant structure for the weak 2-functor $H: \cC
  \to \cD$ consists of:
  \begin{itemize}
  \item a pseudo-natural transformation $T$ in the diagram
    \begin{equation}
      \vcenter{\hbox{\includegraphics{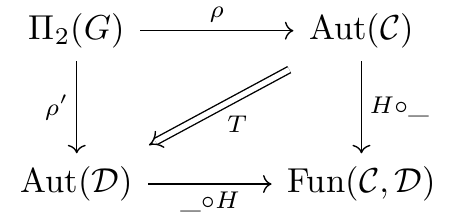}}}
    \end{equation}
    which very explicitly consists of the data:
    \begin{itemize}
    \item a pseudo-natural transformation
      \begin{equation}
        T_g : H \circ F_g \to F_g' \circ H
      \end{equation}
      for every $g \in G$, explaining the name $G$-equivariant
      structure, 
    \item For every path $\gamma: g \to h$, an invertible modification
      $T_\gamma$ in the diagram
      \begin{equation}
        \vcenter{\hbox{\includegraphics{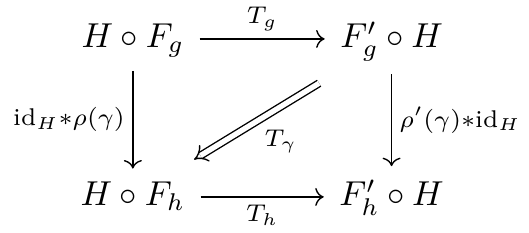}}}
      \end{equation}
    \end{itemize}
  \item for every $g$, $h \in G$, invertible modifications $P_{gh}$
    \begin{equation}
      \vcenter{\hbox{\includegraphics{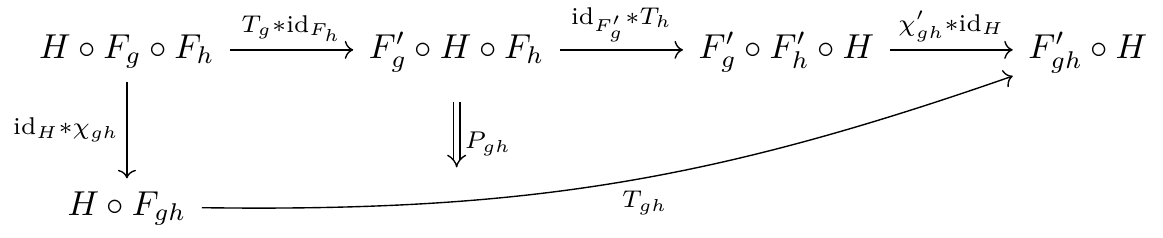}}}
    \end{equation}
  \item a modification $N$
    \begin{equation}
      \vcenter{\hbox{\includegraphics{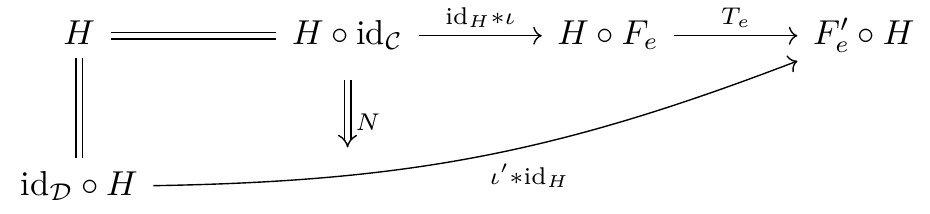}}}
    \end{equation}
     \end{itemize}
  so that the three equations of a tritransformation in definition 3.3
  of \cite{gps95} are fulfilled.
\end{newdef}
\begin{remark}
  We have defined a $G$-equivariant structure on a weak 2-functor $H$
  in such a way that it induces a tritransformation $\rho \to
  \rho'$ between the two actions. It is crucial to remark that the $G$-equivariant structure
  induces a weak 2-functor $H^G$ on homotopy fixed point bicategories:
  \begin{equation}
    H^G:\cC^G = \Nat(\Delta, \rho) \to \Nat(\Delta, \rho') =\cD^G.
  \end{equation}
\end{remark}
Explicitly, the induced functor on homotopy fixed points is given as follows:
\begin{newdef}
  \label{def:induced-fixed-points}
  Suppose that $H:\cC \to \cD$ is a weak 2-functor between
  bicategories endowed with $G$-actions $\rho$ and $\rho'$, and
  suppose that $H$ possesses a $G$-equivariant structure as in
  definition \ref{def:g-equivariant-structure}. Then, the induced
  functor $H^G : \cC^G \to \cD^G$ is then given as follows: On objects
  $(c, \Theta, \Pi, M)$ as defined in \cite[Remark 3.11]{hsv16} of the
  homotopy fixed point bicategory $\cC^G$ we define:
  \begin{itemize}
  \item On the object $c$ of $\cC$, we have $H^G(c):=H(c)$,
  \item On the pseudo-natural equivalence $\Theta$, we define the
    functor on the 1-cell $\Theta_g: c \to F_g(c)$ by
    \begin{equation}
      H^G( \Theta_g):= \left( H(c) \xrightarrow{H(\Theta_g)} H(F_g(c)) \xrightarrow{T_g(c)} F_g'(H(c))  \right),
    \end{equation}
    where $F_g$ and $F'_g$ are data given by the action as defined
    in \cite[Remark 3.8]{hsv16}, whereas on the 2-dimensional
    component $\Theta_\gamma$ in the diagram
    \begin{equation}
      \vcenter{\hbox{\includegraphics{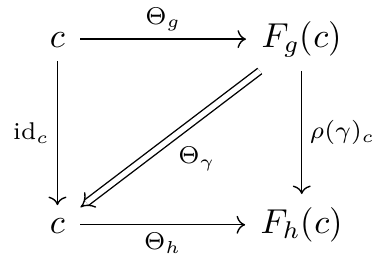}}}
    \end{equation}
    we assign the 2-morphism
    \begin{equation}H^G(\Theta_\gamma):= \qquad
      \vcenter{\hbox{\includegraphics{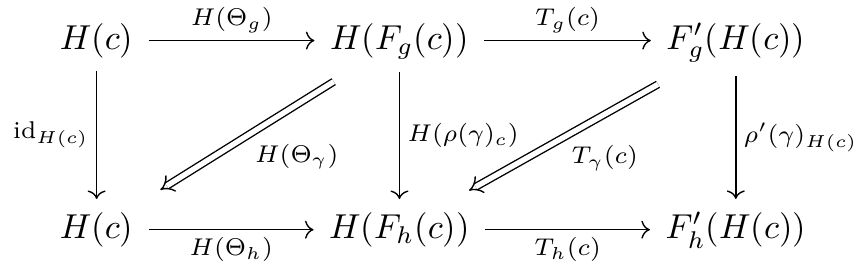}}}
    \end{equation}
  \item For the modification $\Pi$, we assign the 2-morphism
  \end{itemize}
  \begin{equation}
    \vcenter{\hbox{\includegraphics[width=\textwidth]{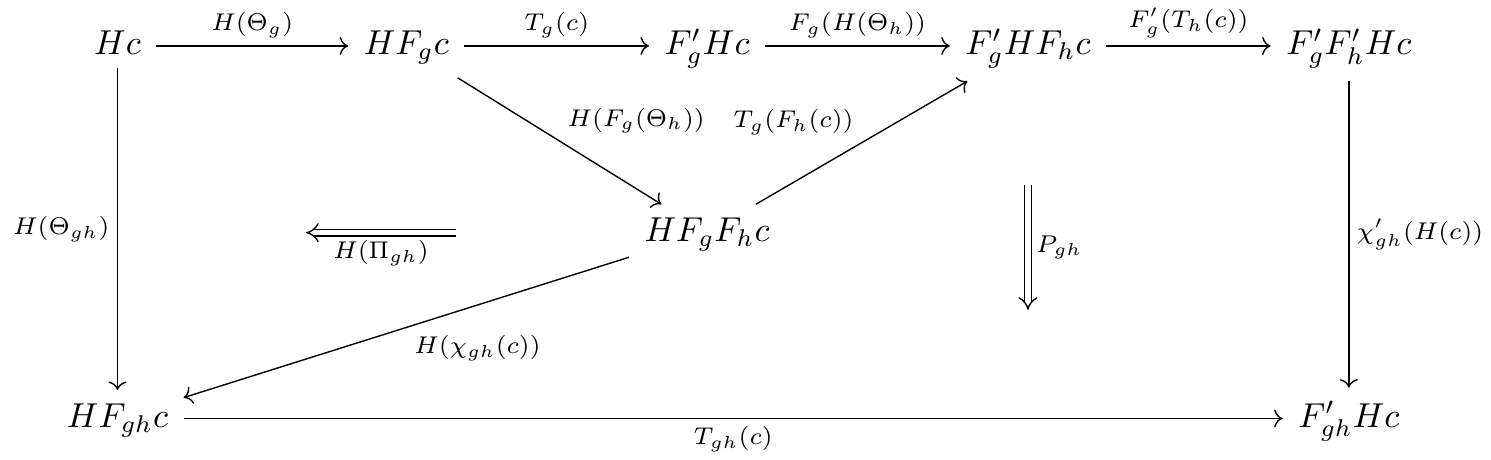}}}
  \end{equation}
  \begin{itemize}
  \item The induced functor sends the modification $M$ to the
    modification in the diagram
    \begin{equation}
      \vcenter{\hbox{\includegraphics{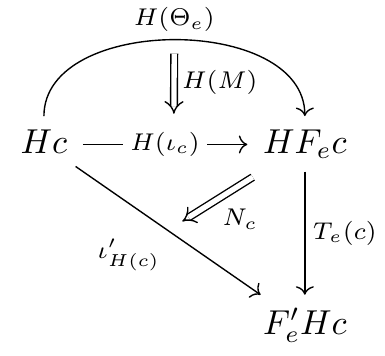}}}
    \end{equation}
  \end{itemize}
  It is a straightforward, but tedious to see that $(H^G(c),
  H^G(\Theta), H^G(\Pi), H^G(M))$ is a homotopy fixed point in
  $\cD^G$.
\end{newdef}
\begin{newdef}
  \label{def:induced-1-morphims}
  If $(f,m): (c, \Theta, \Pi, M) \to (\tilde c , \tilde \Theta ,
  \tilde \Pi, \tilde M)$ is a
  morphism of homotopy fixed points in $\cC^G$ as considered in
  \cite[Remark 3.12]{hsv16}, the induced functor
  $H^G$ is given on 1-morphisms of homotopy fixed points by $H^G(f):=
  H(f)$, and by
  \begin{equation}
    H^G(m_g) := \qquad 
    \vcenter{\hbox{\includegraphics{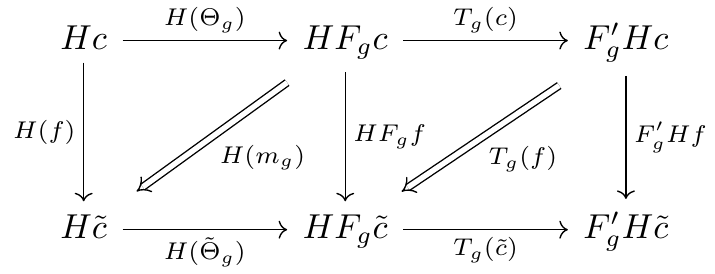}}}
  \end{equation}
\end{newdef}
\begin{newdef}
  \label{def:induced-2-morphism}
  If $(f,m)$ and $(\xi, n)$ are two 1-morphisms of homotopy fixed
  points in $\cC^G$, and $\sigma:((f,m) \to (\xi,n))$ is a 2-morphism
  of homotopy fixed points as in \cite[Remark 3.14]{hsv16}, the
  induced functor on 2-morphisms is given by $H^G(\sigma):=
  H(\sigma)$.
\end{newdef}
\begin{lemma}
  Let $\Rep: \core{ \Alg_2^\fd }\to \core{ \Vect_2^\fd}$ be the functor
  which send a finite-dimensional, semisimple algebra
 to  its category of finitely-generated modules,
  and let $\rho: \Pi_2(SO(2)) \to \Aut(\core{ \Alg_2^\fd})$ and $\rho':
  \Pi_2(SO(2)) \to \Aut( \core{ \Vect_2^\fd})$ be the trivial actions. Then,
  $\Rep$ has got a canonical $SO(2)$-equivariant structure given by
  taking identities everywhere.
\end{lemma}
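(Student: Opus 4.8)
The plan is to verify directly that the assignment of identities in every slot satisfies the axioms of a $G$-equivariant structure in the sense of definition \ref{def:g-equivariant-structure}. The starting observation is that since both $\rho$ and $\rho'$ are the trivial actions, for every $g \in SO(2)$ the 2-functors $F_g$ and $F_g'$ are the respective identity functors, and the remaining coherence data of the actions (the adjoint equivalences attached to paths $\gamma$, the compositors $F_g \circ F_h \to F_{gh}$, and the unitors) are identities as well. Consequently the composites $H \circ F_g$ and $F_g' \circ H$ in the definition both equal $\Rep$, so that setting $T_g := \id_{\Rep}$ gives a well-defined pseudo-natural transformation for every $g$.

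First I would specify all the remaining data as identities: for each path $\gamma : g \to h$ in $\Pi_2(SO(2))$ take the invertible modification $T_\gamma$ to be the identity, for each pair $g, h \in SO(2)$ take the invertible modification $P_{gh}$ to be the identity, and take $N$ to be the identity modification. Because $T_g = \id_{\Rep}$ and the structural cells of the trivial actions are identities, every pasting diagram serving as the source and target of these modifications has identity 2-functors and identity pseudo-natural transformations along its boundary; hence the identity modification is a legitimate filler in each case, and all the data of definition \ref{def:g-equivariant-structure} are accounted for.

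It then remains to check the three axioms of a tritransformation from definition 3.3 of \cite{gps95}. The key point is that every piece of data entering these axioms—the components $T_g$, the modifications $T_\gamma$, $P_{gh}$ and $N$, together with all the structural cells of the trivial actions $\rho$ and $\rho'$—is an identity, so that both sides of each of the three coherence equations are pastings built solely from identity 2-cells, and each such pasting evaluates to an identity 2-cell. The two sides therefore agree, and the axioms hold. I expect the only mild obstacle to be bookkeeping: one must confirm that, once the trivializations $F_g = F_g' = \id$ are inserted, the source and target 1-cells on the two sides of each axiom coincide on the nose rather than merely up to isomorphism. This is immediate from triviality of the actions, so taking identities everywhere indeed defines a canonical $SO(2)$-equivariant structure on $\Rep$.
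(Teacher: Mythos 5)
Your proposal is correct and follows essentially the same route as the paper: since both actions are trivial, one takes $T_g$, $T_\gamma$, $P_{gh}$ and $N$ to be identities, and the tritransformation axioms hold because every cell involved is an identity. The only difference is that you spell out the verification of the coherence equations, which the paper leaves implicit.
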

\begin{proof}
  We need to provide the data in definition
  \ref{def:g-equivariant-structure}: Since both actions are trivial, we
  may choose $T_g : \Rep \to \Rep$ to be the identity pseudo-natural
  transformation for every $g \in G$, and $T_\gamma : \id_{\Rep} \circ
  T_g \to T_h \circ \id_{\Rep}$ to be the identity modification for
  every path $\gamma: g \to h$. Furthermore, we may also choose
  $P_{gh}$ and $N$ to be the identity modifications.
\end{proof}
Since the representation functor is $SO(2)$-equivariant, it induces a
functor on homotopy fixed point bicategories by definitions
\ref{def:induced-fixed-points}, \ref{def:induced-1-morphims} and
\ref{def:induced-2-morphism}. We claim:
\begin{theorem}
  \label{thm:cy-frob-induced}
  The diagram
  \begin{equation}
\label{eq:rep-induced}
    \vcenter{\hbox{\includegraphics{pics/pdf/motivation}}}
  \end{equation}
  commutes up to a pseudo-natural isomorphism. Here, the unlabeled
  equivalences are induced by \cite[Corollary 4.2 and 4.5]{hsv16},
  while the functor $ \Rep^{\fg}$ is constructed in section \ref{sec:rep-construction}.
\end{theorem}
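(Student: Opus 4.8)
The plan is to make the two unlabeled equivalences explicit via the description of homotopy fixed points in \cite{hsv16}, evaluate both composites around the square on a generic homotopy fixed point, and exhibit a canonical natural isomorphism between the results. Recall from \cite[Corollary 4.2]{hsv16} that a homotopy fixed point $(A, \Theta, \Pi, M)$ of the trivial $SO(2)$-action on $\core{\Alg_2^\fd}$ is sent to the symmetric Frobenius algebra $(A,\lambda)$, where the form $\lambda$ is read off from the coherence datum trivializing the Nakayama automorphism; dually, \cite[Corollary 4.5]{hsv16} sends a homotopy fixed point on $\core{\Vect_2^\fd}$ to the Calabi-Yau category whose trace is read off from the analogous datum. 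After the Artin-Wedderburn decomposition $A \cong \prod_{i=1}^r M_{n_i}(\K)$, and using Lemma \ref{lem:cy-classification}, both assignments amount to recording a vector $(\lambda_1, \dots, \lambda_r) \in (\K^*)^r$ of invertible scalars, one per matrix block (respectively, one per simple object).

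Next I would trace the induced functor $\Rep^{SO(2)}$ through the square. By the preceding lemma the $SO(2)$-equivariant structure on $\Rep$ consists of identities everywhere, so in definitions \ref{def:induced-fixed-points}, \ref{def:induced-1-morphims} and \ref{def:induced-2-morphism} every composition with $T_g$, $T_\gamma$, $P_{gh}$ and $N$ collapses. Hence $\Rep^{SO(2)}$ sends $(A, \Theta, \Pi, M)$ to $(\Rep^{\fg}(A), \Rep^{\fg}(\Theta), \Rep^{\fg}(\Pi), \Rep^{\fg}(M))$, with all structure maps simply obtained by applying $\Rep^{\fg}$ to the original coherence data. In particular the simple objects of $\Rep^{\fg}(A)$ are the modules $V_i := \K^{n_i}$, and the induced homotopy fixed point attaches to each $V_i$ the scalar obtained by transporting the block datum $\lambda_i$ through $\Rep^{\fg}$.

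The heart of the argument is then the Schur's lemma computation alluded to in the preceding remark. I would verify that the scalar the induced homotopy fixed point on $\Rep^{\fg}(A)$ attaches to $V_i$ coincides with $\tr^{\lambda}_{V_i}(\id_{V_i})$, where $\tr^\lambda$ is the Calabi-Yau trace of definition \ref{cy-structure-rep}. This is precisely Example \ref{ex:rank-of-k^n}: since the trace construction is linear in $\lambda$ and $\lambda = \sum_{i=1}^r \lambda_i \tr_i$, only the $i$-th block acts nontrivially on $V_i$ and one finds $\tr^\lambda_{V_i}(\id_{V_i}) = \lambda_i$. Thus the vector of scalars produced by going down-then-across (first extract $(A,\lambda)$, then apply $\Rep^{\fg}$) agrees blockwise with the vector produced by going across-then-down (first apply $\Rep^{SO(2)}$, then extract the Calabi-Yau structure). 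By Lemma \ref{lem:cy-classification} a Calabi-Yau structure on a finite semisimple category is determined by this vector, so the two Calabi-Yau categories coincide, and the underlying linear equivalence is the standard comparison.

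The main obstacle is not this scalar-matching, which is genuinely just Schur's lemma, but the higher-categorical bookkeeping needed to promote objectwise agreement into a full pseudo-natural isomorphism of $2$-functors. One must produce a coherent family of natural isomorphisms indexed by homotopy fixed points, check naturality against $1$-morphisms of homotopy fixed points (the data $(f,m)$ of definition \ref{def:induced-1-morphims}) and against $2$-morphisms (definition \ref{def:induced-2-morphism}), and verify the modification and perturbation axioms. Because the equivariant structure on $\Rep$ is trivial and the equivalences of \cite{hsv16} are built blockwise, each such condition reduces to a diagram of invertible scalars that commutes by associativity together with the symmetry $\lambda(ab)=\lambda(ba)$; assembling these into the required pseudo-natural isomorphism is routine but lengthy, and the full verification is carried out in \cite{hessephd}.
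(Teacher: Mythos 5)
Your proposal is correct and follows essentially the same route as the paper: both arguments unwind the equivalences of \cite{hsv16} to reduce each composite to a vector of invertible scalars $(\lambda_1,\dots,\lambda_r)$ attached to the matrix blocks (equivalently, the simple modules), observe via Schur's lemma and the triviality of the equivariant structure that the two vectors agree blockwise, and defer the remaining pseudo-naturality bookkeeping to \cite{hessephd}. Your explicit appeals to Lemma \ref{lem:cy-classification} and Example \ref{ex:rank-of-k^n} merely make precise two steps the paper states informally.
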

\begin{proof}
 Let
 \begin{equation}
    \begin{aligned}
   F:   (\core{\Alg_2^\fd})^{SO(2)} & \cong \Frob \\
     G : (\core{\Vect_2^\fd})^{SO(2)} & \cong \CY
    \end{aligned}
  \end{equation}
  be the equivalences of bicategories constructed in \cite{hsv16}.  By
  \cite[Theorem 4.1]{hsv16}, the bicategory $(\core{
    \Alg_2^\fd})^{SO(2)}$ is equivalent to a bicategory where objects
  are given by semisimple algebras $A$, together with an isomorphism of
  Morita contexts $\lambda: \id_A \to \id_A$, where $\id_A$ is the
  identity Morita context, consisting of the algebra $A$ considered as
  an $(A,A)$-bimodule. If $(A,\lambda)$ is an object of $(\core{
    \Alg_2^\fd})^{SO(2)}$, we need to construct an equivalence of
  Calabi-Yau categories
\begin{equation}
\label{eq:eta-diagram}
  \eta_{(A,\lambda)} :( G \circ \Rep^{SO(2)})(A,\lambda) \to (\Rep^{\fg}
  \circ F) (A,\lambda).
\end{equation}
 By definition \ref{def:induced-fixed-points}, the value of $\Rep^{SO(2)}$ on $A$ is given
  by $\Rep^{\fg}(A)$, the category of finitely-generated modules of $A$. The value of $\Rep^{SO(2)}$ on $\lambda$
  is given by the natural isomorphism defined as follows: if $_AM$ is
  an $A$-module, the natural transformation $\Rep^{SO(2)}(\lambda)$ of
  the identity functor on $\Rep^{\fg}(A)$ is given
  in components by
  \begin{equation}
    \Rep^{SO(2)}(\lambda)_M:=   \left( M \cong A \ot_A M \xrightarrow{\lambda \ot
      \id_M} A \ot_A M \cong M \right).
  \end{equation}
We know that $A$ is isomorphic to a direct sum of matrix algebras: 
\begin{equation}
\label{eq:a-matrix}
  A  \cong \bigoplus_{i=1}^{r} M_{d_i}(\K).
\end{equation}
Let 
\begin{equation}
\label{eq:lambda-i}
(\lambda_1, \ldots, \lambda_r) \in \K^r \cong Z(A)
\cong \End_{(A,A)}(A)  
\end{equation}
 be the scalars corresponding to the isomorphism
of Morita contexts $\lambda$. Then, the Calabi-Yau structure on $(G
\circ \Rep^{SO(2)})(A, \lambda)$ is given as follows: it suffices to
write down a trace for the simple modules, because $\Rep^{\fg}(A)$ is
semisimple. If $X_i$ is a simple $A$-module, chasing through the
equivalence $G$ shows that the trace is given by identifying the
division algebra
$ \End_{\Rep^{\fg}(A)}(X_i)$ with the algebraically closed ground field $\K$ by Schur's
lemma, and then (up to a permutation of the simple modules)
multiplying with the scalar $\lambda_i$.

On the other hand, chasing through the equivalence of bicategories $F$
in \cite[Corollary 4.2]{hsv16}, we see that the Frobenius algebra
$F(A, \lambda)$ is given by the semisimple algebra $A$ as in equation \eqref{eq:a-matrix}, together with
the Frobenius form given by taking direct sums of matrix traces,
multiplied with the scalars $\lambda_i$ in equation
\eqref{eq:lambda-i}.
Using the construction of the functor $\Rep^{\fg}$ in section
\ref{sec:rep-construction} shows that the Calabi-Yau category
$(\Rep^{\fg} \circ F)(A, \lambda)$ is given by the linear category
$\Rep^{\fg}(A)$, together with the Calabi-Yau structure given by the
composite of the Frobenius form with the Hattori-Stallings trace. For
a simple module $X_i$, this Calabi-Yau structure is given by
multiplying with the scalar $\lambda_i$ under the identification
$\End_{\Rep^{\fg}(A)}(X_i) \cong \K$. Thus, we have succeeded in
finding an equivalence $\eta$ as required in equation
\eqref{eq:eta-diagram}. Going through the equivalences $F$ and $G$, we
check that $\eta$ is even pseudo-natural. This shows that the diagram
\eqref{eq:rep-induced} commutes up to a pseudo-natural isomorphism. 
% In order to show that $\eta$ in equation \eqref{eq:eta-diagram} is a
% pseudo-natural transformation,
% we need to construct an invertible 2-cell $\eta^f$
% in the diagram 
% \begin{equation}
% \label{diag:eta-natural}
%   \begin{tikzcd}[column sep=large, row sep=large]
%     (G \circ \Rep^{SO(2) } )(A,\lambda) \rar{\eta_{(A,\lambda)}} \dar[swap]{
%       (G \circ \Rep^{SO(2) })(f)} &  (F \circ \Rep^\fg)(A,\lambda)
%     \dar{(F \circ \Rep^\fg)(f)} \ar[dl, Rightarrow,
%     "\eta^f", shorten <=5pt, shorten >=5pt] \\
%       (G \circ \Rep^{SO(2) })(A',\lambda') \rar[swap]{\eta_{(A',\lambda')}} & (F \circ \Rep^\fg)(A',\lambda')
%   \end{tikzcd}
% \end{equation}
%  for every 1-morphism $f: (A,\lambda) \to (A', \lambda')$ in $
% (\core{\Alg_2^\fd})^{SO(2)} $. Under the equivalence of bicategories
% in \cite[Theorem, 4.1]{hsv16}, 1-morphisms in $(\core{\Alg_2^\fd})^{SO(2)}$
% are given by 1-morphisms in $\core{\Alg_2^\fd}$, satisfying a certain
% equation. These are precisely the compatible Morita contexts given by
% $f=(M, N, \eps, \eta)$. Going through the equivalences in the diagram,
% we see that both compositions are isomorphic to the functor $M \ot_A -$
\end{proof}

\newpage
%\bibliographystyle{amsalpha}%amsalpha
%\bibliography{bibliography-p2}
\newcommand{\etalchar}[1]{$^{#1}$}
\providecommand{\bysame}{\leavevmode\hbox to3em{\hrulefill}\thinspace}
\providecommand{\ZM}{\relax\ifhmode\unskip\space\fi Zbl }
\providecommand{\MR}{\relax\ifhmode\unskip\space\fi MR }
\providecommand{\arXiv}[1]{\relax\ifhmode\unskip\space\fi\href{http://arxiv.org/abs/#1}{arXiv:#1}}
% \MRhref is called by the amsart/book/proc definition of \MR.
\providecommand{\MRhref}[2]{%
  \href{http://www.ams.org/mathscinet-getitem?mr=#1}{#2}
}
\providecommand{\href}[2]{#2}

\end{document}